\documentclass[12pt,xcolor=svgnames]{amsart}

\usepackage{amsmath, amsthm, amssymb, amsfonts, verbatim}

\usepackage{xspace,mathtools,bm}

\usepackage[svgnames]{xcolor}

\usepackage[pagebackref=true, colorlinks=true, citecolor=blue]{hyperref}

\allowdisplaybreaks

\hypersetup{
    colorlinks=true,
    citecolor=red,
    linkcolor=blue,
    filecolor=magenta,      
    urlcolor=cyan,
    pdfpagemode=FullScreen,
    }
\newtheorem{theorem}{Theorem}[section]

\newtheorem{lemma}[theorem]{Lemma}
\newtheorem{definition}[theorem]{Definition}
\newtheorem{proposition}[theorem]{Proposition}
\newtheorem{remark}[theorem]{Remark}

\newcommand{\R}{\mathbb{R}}
\newcommand{\eps}{\varepsilon}
\newcommand{\abs}[1]{\mid\!#1\!\mid}
\newcommand{\norm}[2]{\left\lVert#1\right\rVert_{#2}}
\newcommand{\eqdef}{:=}

\newcommand{\Obsolete}[1]{
    }

\DeclareMathOperator\supp{supp}

\DeclareFontFamily{U}{mathx}{}
\DeclareFontShape{U}{mathx}{m}{n}{<-> mathx10}{}
\DeclareSymbolFont{mathx}{U}{mathx}{m}{n}
\DeclareMathAccent{\widehat}{0}{mathx}{"70}
\DeclareMathAccent{\widecheck}{0}{mathx}{"71}

\date{\today}
\usepackage[english]{babel}

\author{Zachary Radke}
\address{Oregon State University, Department of Mathematics}
\email{radkeza@oregonstate.edu}
\title{The Inviscid Limit in Uniformly Local Yudovich Spaces}
\date{\today}

\begin{document}

\maketitle
\begin{abstract}
We consider solutions to the two-dimensional Navier-Stokes and Euler equations with the velocity in $L^{\infty}(\R^2)$ and the vorticity in the uniformly local Yudovich space $Y^{\theta}_{ul}(\R^2)$. We show that if $\theta$ grows sufficiently slowly, then for every $T>0$, solutions of the Navier-Stokes equation converge to solutions of the Euler equation in $L^{\infty}([0,T];L^2_{ul}(\R^2))$ as $\nu \to 0^+$, provided they are generated from the same initial data.
\end{abstract}

\section{Introduction}
\noindent In this paper, we study the inviscid limit. The question of the inviscid limit is whether or not a solution of the Navier-Stokes equation converges in some norm to a solution of the Euler equation with the same initial data, as the viscosity tends to $0$. The two-dimensional Navier-Stokes equations modeling incompressible viscous fluid flow are given by
\begin{equation}
\label{NS}
\tag{$NS_{\nu}$}
\begin{cases}
\partial_tu^{\nu}+(u^{\nu}\cdot\nabla)u^{\nu} = \nu\Delta u^{\nu} -\nabla p^{\nu} \quad &(t,x) \in (0,T)\times \R^2,\\
\text{div }u^{\nu}=0  &(t,x) \in [0,T)\times\R^2,\\
u^{\nu}\vert_{t=0}=u_0 & x \in \R^2,
\end{cases}
\end{equation}
where $u^{\nu}:[0,T)\times\R^2\to \R^2$ represents the velocity field of the fluid,  $p^{\nu}:[0,T) \times \R^2 \to \R$ denotes the scalar pressure, and $\nu>0$ is the kinematic viscosity of the fluid. The two-dimensional Euler equations modeling incompressible inviscid fluid flow are given by
\begin{equation}
\label{E}
\tag{$E$}
\begin{cases}
\partial_tu+(u\cdot\nabla)u = -\nabla p \quad &(t,x) \in (0,T)\times \R^2,\\
\text{div }u=0  &(t,x) \in [0,T)\times\R^2,\\
u\vert_{t=0}=u_0 & x \in \R^2.
\end{cases}
\end{equation}
Using the incompressibility condition of the velocity field, for smooth-decaying solutions we can completely determine the pressure from the velocity field using the relation $$p=(-\Delta)^{-1}\text{div}((u\cdot\nabla)u).$$ We also define the \emph{vorticity} of the fluid as curl of the velocity field, i.e. $\omega=\text{curl}(u)=\partial_1u_2-\partial_2u_1$. The velocity can be recovered from the vorticity using the relation $u=-\nabla^{\perp}(-\Delta)^{-1}\omega=K\ast \omega$, where $\nabla^{\perp}=(-\partial_2,\partial_1)$. This is known as the \emph{Biot-Savart Law} and $K$ is the \emph{Biot-Savart kernel}. For a given $\nu \geq 0$, we have the vorticity equation
\begin{equation}
\begin{cases}\label{2d vorticity}
\tag{$V_{\nu}$}
\partial_t\omega^{\nu}+u^{\nu}\cdot\nabla\omega^{\nu}=\nu\Delta\omega^{\nu} &\text{ in } (0,T) \times \R^2,\\
u^{\nu}=-\nabla^{\perp}(-\Delta)^{-1}\omega^{\nu} &\text{ in } [0,T) \times \R^2,\\
\omega^{\nu} \vert_{t=0} = \omega_0 &\text{ in } \R^2,
\end{cases}
\end{equation} 
We use the same convention that when $\nu=0$, we set $\omega^{\nu}=\omega$. Using the method of characteristics, we see that solutions to the two-dimensional Euler equations in vorticity form are given by $\omega(t,x)=\omega_0(X_t^{-1}(x))$ where $X_t^{-1}$ is the inverse of the flow map $X_t(x)$, which satisfies the ODE
\begin{equation}
\label{Flow Map ODE}
\begin{cases}
\frac{d}{dt}X_t(x)=u(t,X_t(x))\quad &\text{in } (0,T)\times\mathbb{R}^2,\\
X_t(x) \vert_{t=0} = x\quad &\text{in } \mathbb{R}^2.
\end{cases}
\end{equation}
As we will be studying solutions to both the Euler and Navier-Stokes equations with low regularity, we must define what we mean by a solution to both. 
\begin{definition}
\label{weak solution}A vector field $u: [0,T) \times \R^2 \to \R^2$ is a weak solution to \eqref{E} with initial data $u(0)=u_0:\R^2  \to \R^2$ if 
\begin{enumerate}
\item $u \in L^2_{loc}([0,T)\times \R^2)$,\\
\item 
$$
\int_0^T\int_{\R^2} \Bigl\{ u\cdot\partial_t\phi + u_ku_l\partial_{x_l}\phi_k\Bigr\}\,dx\,dt = -\int_{\R^2} u_0\cdot\phi(0)\,dx
$$
for all $\phi \in C_c^{\infty}([0,T)\times\mathbb{R}^2)$ with $\text{div }\phi=0$, where we sum over repeated indices, and
\item For each $t \in [0,T)$, $\text{div }u=0$ in $\mathcal{D}'(\R^2)$.
\end{enumerate}
\end{definition}

\begin{definition}
\label{mild solution}
We say that $u^{\nu}:[0,T) \times \R^d \to \R^d$ is a mild solution of \eqref{NS} if
$$
u^{\nu}(t)=e^{\nu \Delta t}u_0 - \int_0^t \text{div}(e^{\nu\Delta (t-s)}
\mathbb{P}(u^{\nu}(s)\otimes u^{\nu}(s))) \,ds \quad \text{ for all } t\in [0,T),$$
where $e^{\nu\Delta t}$ is the heat semigroup defined via the Fourier multiplier $e^{-4\pi^2\nu t |\xi|^2}$, $\mathbb{P}$ denotes the Leray projection with $ij$ component $\delta_{ij}+R_iR_j$, where $\delta_{ij}$ is the Kronecker delta, and $R_i$ is a Riesz transform.
\end{definition}
\subsection{Prior Work}
The question of the inviscid limit for the Navier-Stokes equations has a deep literature, and we do not attempt to include an exhaustive list of results. Instead, we refer the reader to \cite{Constantin2020}, \cite{kato1984remarks},  Appendix III in \cite{temam2024navier}, and the results therein. It turns out that the study of the inviscid limit is intimately tied to the study of uniqueness of solutions. When the vorticity is bounded and decaying, we refer the reader to \cite{yudovich1963non} for a result on uniqueness and \cite{Chemin96} for the inviscid limit. When the vorticity is merely bounded, we refer to \cite{cozzi2009vanishing,cozzi2014vanishing} for results on the inviscid limit. However, when the vorticity is unbounded, establishing uniqueness of weak solutions to \eqref{E} is an extremely delicate problem. In general, the uniqueness of weak solutions with $\omega_0 \in L^p(\R^2)$ for $p<\infty$ is an open problem, and we refer the reader to \cite{BRESSAN20205142}, \cite{brue2023nonuniqueness},  \cite{vishik2018instabilityI}, and \cite{vishik2018instabilityII} for recent progress. Hence, in order to study uniqueness and the inviscid limit for weak solutions with unbounded vorticity, it appears that one must place some additional requirements on the initial data. There appear to be two main methodologies which we discuss below. In the work of Vishik in \cite{vishik1999}, they consider the borderline spaces of Besov type defined by 
\begin{equation}
\label{Borderline Space}
B_{\Pi} = \left\{ f \in \mathcal{S}'(\R^d) : \sum_{j = -1}^N \norm{\Delta_jf}{L^{\infty}} = \mathcal{O}(\Pi(N))\right\}.
\end{equation}
Vishik shows that as long as $\Pi(\alpha)2^{-\alpha}$ is non-increasing for $\alpha \geq C$, $\lim_{\alpha \to \infty} \Pi(\alpha)2^{-\alpha}=0$, and 
\begin{equation}
\label{pi growth}
\int_1^{\infty}\frac{d\alpha}{\Pi(\alpha)}=\infty,
\end{equation}
then if $\omega_{1},\omega_2\in L^{\infty}([0,T];L^{p_0}\cap B_{\Pi}(\R^d))$ for some $1<p_0<d$, then $u_1=u_2$ on $[0,T]$. In \cite{cozzi2007vanishing}, Cozzi and Kelliher establish the vanishing viscosity limit in these borderline spaces of Besov type provided that $\Pi(\alpha)=\log^{\kappa}(\alpha+3)$ for $0 \leq \kappa <1$, and a similar result was proven for axisymmetric Euler without swirl in \cite{cozzi2014axisymmetric}.
\\
\\
Instead of considering vorticity with a condition on the blow-up of the $B^0_{\infty,1}$ norm, another common approach is to place a restriction on the blow-up of the $L^p$ norms of the vorticity. This idea was first considered by Yudovich in \cite{yudovich1995bounded}. We introduce this space below.
\begin{definition}
Let $\theta(p) \geq 1$ be a non-decreasing function on $[1,\infty)$. Then we define the Yudovich space $$Y^{\theta} (\Omega) = \left\{ f \in \bigcap_{2 \leq p<\infty} L^p(\Omega) : \norm{f}{Y^{\theta}(\Omega)}=\sup_{2 \leq p <\infty} \frac{\norm{f}{L^p}}{\theta(p)}<\infty\right\}.$$
\end{definition}
Note that when $\theta(p)=1$ one has $Y^{\theta}(\Omega)=L^2\cap L^{\infty}(\Omega)$. In order to state Yudovich's result, we need to define a class of admissible $\theta$. We consider two functions 
\begin{equation}
\label{Beta definition}
\begin{split}
&\beta_{\eps,M,\phi}(x)=M^{\eps}x^{1-\eps}\phi(1/\eps),\\
&\beta_{M,\phi}(x)=\inf_{\eps \in (0,\frac{1}{2}]}\Bigl\{\beta_{\eps,M,\phi}(x)\Bigr\},
\end{split}
\end{equation}
where $M>0$ is some constant. It is important to note that $\beta_{M,\phi}(x) \to 0$ if and only if $x \to 0^+$.
\begin{definition}
    \label{admissible theta} We say that the function $\theta:[1,\infty) \to \R^+$ is admissible if 
    $$
    \int_0^1 \frac{ds}{\beta_{M,\phi}(s)}=\infty,
    $$
    where $\phi(p)=p\theta(p)$.
\end{definition}
Consequently, every admissible growth function satisfies the integral condition required in Theorem \ref{Taniuchi Main theorem} and Appendix \ref{appendix}.
\begin{lemma}
Let $\theta:[1,\infty)\to[1,\infty)$ be non-decreasing, let $\phi(p)=p\theta(p)$, and let $\beta{_{M,\phi}}$ be as in \eqref{Beta definition}. If 
$$
\int_0^1 \frac{ds}{\beta_{M,\phi}(s)}=\infty,
$$
then
$$
\int_1^{\infty} \frac{dp}{\theta(p)}=\infty.
$$
\end{lemma}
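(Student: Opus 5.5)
The plan is to prove the contrapositive: assume $\int_1^\infty dp/\theta(p)<\infty$ and show $\int_0^1 ds/\beta_{M,\phi}(s)<\infty$. Since $\beta_{M,\phi}$ is positive and bounded below on $[s_0,1]$ for every $s_0>0$, only the behaviour as $s\to0^+$ matters.

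\emph{Step 1: change variables.} Writing $p=1/\eps\in[2,\infty)$ gives
$$\beta_{M,\phi}(s)=\inf_{p\ge 2} s\,(M/s)^{1/p}\,p\,\theta(p).$$
Set $L=\log(M/s)$, so that $ds/s=-dL$ and $s\to0^+$ corresponds to $L\to\infty$. Then $\beta_{M,\phi}(s)=s\,g(L)$ with
$$g(L)=\inf_{p\ge2}e^{L/p}\,p\,\theta(p).$$
It therefore suffices to show $\int_{L_0}^\infty dL/g(L)<\infty$ for some large $L_0$. We need a \emph{lower} bound on $g$, i.e.\ an upper bound on $1/g(L)=\sup_{p\ge2} e^{-L/p}/(p\theta(p))$. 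We bound this supremum by the sum of its suprema over three ranges of $p$.

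\emph{Step 2: split the range of $p$.}
\begin{itemize}
\item[(i)] For $p\ge L/2$: using $e^{-L/p}\le1$ and monotonicity of $\theta$, the contribution is at most $\frac{2}{L\,\theta(L/2)}$.
\item[(ii)] For $2\le p\le L/(2\log L)$: using $\theta\ge1$, $p\ge2$ and $e^{-L/p}\le e^{-2\log L}$, the contribution is at most $L^{-2}$.
\item[(iii)] For $L/(2\log L)\le p\le L/2$: using $e^{-L/p}\le1$, $p\ge L/(2\log L)$ and monotonicity of $\theta$, the contribution is at most $\frac{2\log L}{L\,\theta\bigl(L/(2\log L)\bigr)}$.
\end{itemize}

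\emph{Step 3: integrate each bound.}
\begin{itemize}
\item[(i)] $\int^\infty \frac{dL}{L\theta(L/2)}\le \int^\infty\frac{dL}{\theta(L/2)}<\infty$ for $L\ge1$, by the hypothesis after rescaling.
\item[(ii)] $L^{-2}$ is trivially integrable at infinity.
\item[(iii)] This is the main obstacle. Substitute $q=L/(2\log L)$, which is increasing for large $L$. Then $L\sim 2q\log q$ and $dL\sim 2\log L\,dq\lesssim \log q\,dq$, so the integrand becomes, up to constants, $\frac{(\log q)^2}{q\log q\,\theta(q)}=\frac{\log q}{q}\cdot\frac1{\theta(q)}$. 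Since $\log q/q$ is bounded, this is dominated by $C/\theta(q)$, which is integrable by hypothesis.
\end{itemize}

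The only care needed is in making the asymptotic substitution in (iii) rigorous. This amounts to routine bounds such as $\log L\le 2\log q$ and $dL/dq\le C\log q$ for large $L$.

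Summing the three contributions gives $\int^\infty dL/g(L)<\infty$. Hence $\int_0^1 ds/\beta_{M,\phi}(s)<\infty$, which proves the contrapositive and hence the lemma.
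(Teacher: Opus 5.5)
Your argument is correct, and it takes a genuinely different route from the paper. Both proofs assume $\int_1^\infty dp/\theta(p)<\infty$ and show $\int_0^1 ds/\beta_{M,\phi}(s)<\infty$.

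The paper first extracts a structural consequence of the hypothesis. Since $1/\theta$ is non-increasing, $\frac{p}{2\theta(p)}\le\int_{p/2}^{p}\frac{ds}{\theta(s)}\to0$, so $\theta(p)/p\to\infty$ and hence $\theta(p)\ge Cp$ for all $p\ge2$. This reduces everything to the model weight $\phi(p)\gtrsim p^2$. One explicit minimization (at $p=\tfrac12\log(1/s)$) then gives $\beta_{1,\phi}(s)\gtrsim s\log^2(1/s)$ for $s\le e^{-4}$, whose reciprocal is integrable at $0$. The constant $M$ is removed by the scaling $\beta_{M,\phi}(s)=M\beta_{1,\phi}(s/M)$, and the bound $\beta_{1,\phi}(s)\ge 2s$ handles $[e^{-4},1]$.

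You instead keep $\theta$ arbitrary, split the range of $p$ into three regimes, and carry the integrability of $1/\theta$ through the substitution $q=L/(2\log L)$. The paper's route is shorter and avoids the asymptotic change of variables. Yours is more hands-on but retains more information about $\theta$. Before your final crude step, regimes (i) and (iii) only use $\int^\infty\frac{dq}{q\,\theta(q)}<\infty$ and $\int^\infty\frac{\log q}{q\,\theta(q)}\,dq<\infty$. So you have actually shown that $\theta$ fails to be admissible as soon as $\int^\infty\frac{\log p}{p\,\theta(p)}\,dp<\infty$, a strictly weaker hypothesis than the lemma's.

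The logarithm is lost only because you bound $e^{-L/p}\le1$ on all of regime (iii). If you instead split $2\le p\le L/2$ dyadically into $[L/2^{j+1},L/2^{j})$, where $e^{-L/p}\le e^{-2^{j}}$, and use the dilation invariance of $dL/L$, you get non-admissibility already when $\int^\infty\frac{dp}{p\,\theta(p)}<\infty$. The paper's reduction to $\theta(p)\ge Cp$ cannot detect any of this, since it discards all information about $\theta$ beyond superlinear growth.
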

\begin{proof}
We begin by noting that $\beta_{M,\phi}(s)=M\beta_{1,\phi}(s/M)$, so without loss of generality we can assume that $M=1$. For the sake of contradiction, assume that 
$$
\int_1^{\infty}\frac{dp}{\theta(p)}<\infty.
$$
Since $\theta$ is non-decreasing, we see that $\frac{1}{\theta}$ is non-increasing, and hence
$$
\frac{p}{2\theta(p)} \leq \int_{p/2}^p \frac{ds}{\theta(s)} \to 0 \text{ as }p \to \infty.
$$
In particular, one has that $\frac{\theta(p)}{p} \to \infty$. Since $\theta(p) \geq 1$, there exists $C>0$ such that $\theta(p) \geq Cp$ for all $p \geq 2$. Setting $p=\frac{1}{\eps}$ we obtain
\begin{equation*}
\begin{split}
\beta_{1,\phi}(s) &= \inf_{p \geq 2} \left\{s^{1-1/p}p\theta(p)\right\}\\
& \geq Cs \inf_{p \geq 2} \left\{ p^2\exp\left(\frac{\log(1/s)}{p}\right)\right\}.
\end{split}
\end{equation*}
For $0 <s \leq e^{-4}$, the last expression is minimized at $p=\frac{1}{2}\log\left(\frac{1}{s}\right)$ which implies $\beta_{1,\phi}(s) \geq \frac{Ce^2}{4}s(\log(1/s))^2$. It follows that 
$$
\int_0^{e^{-4}} \frac{ds}{\beta_{1,\phi}(s)} \leq \frac{4}{Ce^2}\int_0^{e^{-4}}\frac{ds}{s(\log(1/s))^2}<\infty.
$$
Moreover, we claim that $\beta_{1,\phi(s)} \geq 2s$ for $0<s \leq 1$. To see this, note that 
\begin{equation*}
\beta_{1,\phi}(s) =s\inf_{p \geq 2} s^{-1/p}p\theta(p) \geq s\inf_{p \geq 2} p\theta(p) \geq 2s,
\end{equation*}
where in the first inequality we used that $s^{-1/p} \geq 1$ and in the final inequality we used that $\theta(p) \geq 1$. It follows that the integral over $[e^{-4},1]$ is finite as well, contradicting admissibility.
\end{proof}
An example family of admissible $\theta$ is
\[
\theta_k(p) =  \underbrace{\log(e+p)\cdot \log(e+\log(e+p))\cdot \log(e+\log(e+\log(e+p)))\cdots,}_{\text{$k$ times iterated}}
\] 
where $k \in \mathbb{Z}_{\geq 0}$. Finally, we remark that the condition in Definition \ref{admissible theta} is equivalent to the condition
$$
\int_1^{\infty} \frac{dx}{x\psi(x)}=\infty,
$$
where $\psi(x)=\inf_{\eps \in (0,\frac{1}{2}]}\left\{ \left(x^{\eps}/\eps\right)\theta\left(1/\eps\right)\right\}$ and a change of variables gives 
\begin{equation}
\label{integral equation for theta}
\frac{1}{e}\int_{2}^{\infty} \frac{dp}{p\theta(p)} \leq \int_{e^2}^{\infty} \frac{dx}{x\psi(x)}.
\end{equation}
Hence it is sufficient for $\int_{1}^{\infty}\frac{dp}{p\theta(p)}=\infty$ but not necessary. Under such assumptions on $\theta$, Yudovich establishes the following,
\begin{theorem}[Yudovich, \cite{yudovich1995bounded}]
\label{Yudovich Uniqueness}
Let $\Omega \subseteq \R^2$ be a bounded open set with $C^2$ boundary and assume that $\theta$ is an admissible function. Then for any initial data $\omega_0 \in Y^{\theta}(\Omega)$, there exists a unique weak solution of \eqref{E} such that 
$$
\omega \in L^{\infty}([0,\infty);Y^{\theta}(\Omega)), \quad u \in L^{\infty}([0,\infty);L^{\infty}(\Omega)).
$$
\end{theorem}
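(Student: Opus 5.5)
We would prove Theorem \ref{Yudovich Uniqueness} in two parts: existence by compactness with a priori bounds, and uniqueness by an energy estimate closed with an Osgood-type lemma driven by the admissibility of $\theta$.

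\textbf{Existence.} Mollify $\omega_0$ to get $\omega_0^n\in C^\infty(\overline\Omega)$ with $\norm{\omega_0^n}{L^p}\le \norm{\omega_0}{L^p}$ for every $p\ge 2$. Solve the smooth Euler problem in $\Omega$ with the slip boundary condition, using the classical global theory for smooth data in two dimensions.

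The vorticity is transported by a measure-preserving flow, so $\norm{\omega^n(t)}{L^p}=\norm{\omega_0^n}{L^p}$ for all $t$ and all $p$. Hence $\norm{\omega^n(t)}{Y^\theta}\le\norm{\omega_0}{Y^\theta}$ uniformly in $n$ and $t$.

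Since $\omega^n\in L^2\cap L^p$ with $p>2$ on a bounded domain, the Biot--Savart operator (Green's function of the Dirichlet Laplacian) gives a uniform bound on $u^n$ in $L^\infty$. It also gives a log-Lipschitz modulus, uniform in $n$. The Calder\'on--Zygmund bound $\norm{\nabla u^n}{L^p}\le Cp\norm{\omega^n}{L^p}$ gives $\nabla u^n$ bounded in $L^p$. Aubin--Lions then extracts a subsequence with $u^n\to u$ strongly in $L^2_{loc}$, which suffices to pass to the limit in the nonlinear term of Definition \ref{weak solution}. Weak-$*$ lower semicontinuity gives $\omega\in L^\infty(Y^\theta)$.

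\textbf{Uniqueness.} Let $u_1,u_2$ be two such solutions and set $w=u_1-u_2$. Since $w\cdot n=0$ on $\partial\Omega$ and $\operatorname{div}w=0$, the energy estimate gives
\[
\frac{d}{dt}\norm{w}{L^2}^2\le 2\int_\Omega |w|^2|\nabla u_2|\,dx .
\]
Justifying this identity for weak solutions needs a mollification argument, which is routine given $u_i\in L^\infty$ and $\nabla u_i\in L^p$. Let $E=\norm{w}{L^2}^2$, which is bounded by a constant $M$ because the velocities are bounded on the bounded domain. For $p=1/\eps$, H\"older gives
\[
\int |w|^2|\nabla u_2|\le \norm{\nabla u_2}{L^p}\,\norm{w}{L^\infty}^{2/p}\,E^{1-1/p}.
\]
Using $\norm{\nabla u_2}{L^p}\le Cp\,\theta(p)\norm{\omega_2}{Y^\theta}=C\phi(p)$, this is bounded by $C M^{\eps}E^{1-\eps}\phi(1/\eps)$, after adjusting $M$ to absorb the $L^\infty$ factor.

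Taking the infimum over $\eps\in(0,\tfrac12]$ yields the differential inequality
\[
E'\le C\beta_{M,\phi}(E),\qquad E(0)=0 .
\]
The admissibility condition $\int_0^1 ds/\beta_{M,\phi}(s)=\infty$ is exactly Osgood's criterion, so $E\equiv0$. One should check that $\beta_{M,\phi}$ is continuous, nondecreasing and positive for $s>0$, which holds because it is an infimum of power functions.

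The main obstacle is the elliptic estimate $\norm{\nabla u}{L^p}\le Cp\norm{\omega}{L^p}$, with constant linear in $p$, on a bounded $C^2$ domain. It is exactly this factor $p$ that produces $\phi(p)=p\theta(p)$. We would obtain it from boundary Calder\'on--Zygmund theory for the Dirichlet problem, together with interpolation of the operator norm between $L^2$ and $BMO$. The justification of the energy identity for weak solutions is a secondary technical point.
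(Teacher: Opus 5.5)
Your proposal is correct. Its uniqueness half is exactly the argument the paper sketches: the energy inequality for $w=u_1-u_2$, the H\"older step $\int |w|^2|\nabla u_2| \le \norm{\nabla u_2}{L^p}\norm{w}{L^\infty}^{2/p}E^{1-1/p}$ (which is Lemma \ref{beta lemma} proved inline), the bound $\norm{\nabla u}{L^p}\le Cp\theta(p)\norm{\omega}{Y^{\theta}}$, and Osgood's lemma via admissibility. Your compactness-based existence part supplies what the paper simply cites from Yudovich.

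Two small corrections, neither of which affects the argument. First, for unbounded $\theta$ the approximating velocities are not uniformly log-Lipschitz; Morrey only gives a modulus like $\inf_{p\ge 2}p\theta(p)r^{1-2/p}$. You never use that claim, since Aubin--Lions already gives the compactness. Second, $\beta_{M,\phi}$ is continuous because it is an infimum of concave functions of $s$, not merely because it is an infimum of power functions: an infimum of continuous functions is in general only upper semicontinuous.
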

The main idea in his proof is as follows. If $\overline{u}$ is the difference of two solutions to the Euler equations generated from the same initial data, one has 
\begin{equation}
\label{Yudovich 1}
\int_{\Omega} |\overline{u}(t,x)|^2\,dx = - \sum_{i,k=1}^2 \int_0^t \int_{\Omega} \left(\frac{\partial u_i}{\partial x_k} + \frac{\partial u_k}{\partial x_i}\right)\overline{u_i}\hspace{.05cm}\overline{u_k}\,dx\,dt.
\end{equation}
When the velocity gradient is bounded, uniqueness follows immediately from Gr\"{o}nwall's Lemma. However, since we are not in this case, Yudovich makes use of the following lemma. 

\begin{lemma}[Lemma 3.1, \cite{yudovich1995bounded}]
\label{beta lemma}
Let $\Omega \subseteq \R^d$ and let $f$ and $g$ be nonnegative, measurable, real functions, such  that $f \in L^1\cap L^{\infty}(\Omega)$  and $ g \in L^{1/\eps}(\Omega)$ for all $\eps \in (0,1/2]$ . Let $M \geq \norm{f}{L^{\infty}(\Omega)}$. Assume that for some positive function $\phi:[1,\infty) \to \R^+$,
$$
\norm{g}{L^p(\Omega)} \leq \phi(p),
$$
for all $p \geq 2$. Then 
$$
\int_{\Omega} f(x)g(x)\,dx \leq \beta_{M,\phi} (\norm{f}{L^1(\Omega)}).
$$
\end{lemma}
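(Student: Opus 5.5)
The plan is to prove, for each fixed $\eps \in (0,\frac12]$, the bound
$$\int_\Omega fg\,dx \le \beta_{\eps,M,\phi}(\norm{f}{L^1(\Omega)}),$$
and then take the infimum over $\eps$. Since the left-hand side does not depend on $\eps$, the infimum gives exactly $\beta_{M,\phi}(\norm{f}{L^1(\Omega)})$ by \eqref{Beta definition}.

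Fix $\eps\in(0,\frac12]$ and set $p=1/\eps\ge 2$, with conjugate exponent $p'=\frac{1}{1-\eps}$. Hölder's inequality gives
$$\int_\Omega fg\,dx \le \norm{f}{L^{p'}(\Omega)}\norm{g}{L^{p}(\Omega)} \le \norm{f}{L^{p'}(\Omega)}\,\phi(1/\eps),$$
where the hypothesis $\norm{g}{L^p}\le\phi(p)$ applies because $p\ge 2$. Both factors are finite: $g\in L^{1/\eps}$ by assumption, and $f\in L^1\cap L^\infty$.

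Next interpolate the $L^{p'}$ norm of $f$ between $L^1$ and $L^\infty$. Using $f^{p'}=f\cdot f^{p'-1}\le f\,M^{p'-1}$ pointwise a.e. (valid since $f\ge0$ and $f\le M$ a.e.),
$$\norm{f}{L^{p'}}^{p'}\le M^{p'-1}\norm{f}{L^1}, \qquad\text{so}\qquad \norm{f}{L^{p'}}\le M^{1-1/p'}\norm{f}{L^1}^{1/p'} = M^{\eps}\norm{f}{L^1}^{1-\eps}.$$
Combining the two estimates gives
$$\int_\Omega fg\,dx\le M^\eps \norm{f}{L^1}^{1-\eps}\phi(1/\eps)=\beta_{\eps,M,\phi}(\norm{f}{L^1}),$$
which is the claimed bound for fixed $\eps$. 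Taking the infimum over $\eps\in(0,\frac12]$ finishes the proof.

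None of these steps is a real obstacle. The only points needing care are the exponent bookkeeping, namely $1-1/p'=\eps$ and $1/p'=1-\eps$, and checking that the range $\eps\le\frac12$ corresponds exactly to $p\ge2$, which is where the hypothesis on $g$ is available. Nonnegativity of $f$ and $g$ is used only to make sense of the integral and of the pointwise bound $f^{p'}\le M^{p'-1}f$. If $\norm{f}{L^1}=0$, then $f=0$ a.e., so both sides vanish and the bound holds trivially.
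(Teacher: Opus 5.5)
Your proof is correct: H\"older with the exponent pair $(1/\eps,\,1/(1-\eps))$, the interpolation $\norm{f}{L^{1/(1-\eps)}}\le M^{\eps}\norm{f}{L^1}^{1-\eps}$, and then the infimum over $\eps\in(0,\frac12]$ is exactly the standard argument. The paper does not reprove this lemma but only cites it from Yudovich, whose proof is this same H\"older-plus-interpolation computation.
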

\noindent
Then, using that $\norm{\nabla u}{L^p} \leq \frac{Cp^2}{p-1}\theta(p)\norm{\omega}{Y^{\theta}},$ an application of Lemma \ref{beta lemma} to \eqref{Yudovich 1} yields
\begin{equation}
\label{Yudovich 2}
\int_{\Omega} |\overline{u}(t,x)|^2\,dx \leq \int_0^t \beta_{M,\phi}\left(\norm{\overline{u}(t)}{L^2(\Omega)}^2\right)\,dt,
\end{equation}
and uniqueness follows immediately from Osgood's uniqueness theorem and the admissibility condition. Utilizing the above approach that was used to prove uniqueness, in \cite{kelliher2004}, Kelliher was able to establish the inviscid limit in $L^{\infty}([0,T];L^2(\R^2))$ for $u_0 \in L^2(\R^2)$ and $\omega_0 \in Y^{\theta}(\R^2)$, under the same admissibility condition on $\theta$. 
\subsubsection{Uniformly local $L^p$ and Yudovich spaces} As recently observed in \cite{taniuchi2004,taniuchitashirotomoya2010,chen2019two}, the results of Yudovich can be extended to localized spaces as well. We begin by defining such spaces. In what follows we let $\phi \in C_0^{\infty}(\R^2)$ be a standard bump function which is identically $1$ on $B_1(0)$, vanishes on $(B_2(0))^c$, and decreases monotonically when $1 \leq \abs{x}\leq 2$. We also define $\phi_x = \phi(x - \cdot)$ and $\phi_{x,\lambda}=\phi\left(\frac{x-\cdot}{\lambda}\right)$ for any $\lambda>0$.

\begin{definition}
Let $1 \leq p <\infty$. We define the uniformly local $L^p$ space as
$$
L^p_{ul}(\Omega) = \left\{f \in L^p_{loc}(\Omega) : \norm{f}{L^p_{ul}} = \sup_{z \in \R^d} \norm{\phi_zf}{L^p}<\infty\right\}.
$$
For convention we set $L^{\infty}_{ul}(\Omega)=L^{\infty}(\Omega)$.
\end{definition}

\begin{remark} For any fixed $\lambda>0$, we can define an equivalent norm on $L^p_{ul}(\R^d)$ as $$\norm{f}{L^p_{ul}}=\sup_{z \in \R^d}\norm{\phi_{z,\lambda}f}{L^p}.$$ A proof of equivalence can be found in \cite{kato1975cauchy}.
\end{remark} 

\begin{definition} Let $\theta(p) \geq 1$ be a non-decreasing function on $[1,\infty)$.
We define the uniformly local Yudovich space as 
$$
Y^{\theta}_{ul}(\Omega)=\left\{f \in \bigcap_{2 \leq p<\infty} L^p_{ul}(\Omega) : \norm{f}{Y^{\theta}_{ul}} = \sup_{2 \leq p <\infty} \frac{\norm{f}{L^p_{ul}}}{\theta(p)}<\infty\right\}.
$$
\end{definition}
Taniuchi's results can be summarized as follows.
\begin{theorem}
\label{Taniuchi Main theorem}[Taniuchi \cite{taniuchi2004},\cite{taniuchitashirotomoya2010}] Let $\theta:[1,\infty) \to [1,\infty)$ be a non-decreasing function such that 
\begin{equation}
\label{existence growth}
\int_1^{\infty} \frac{dp}{\theta(p)}=\infty.
\end{equation}
Then given $u_0 \in L^{\infty}(\R^2), \text{div }u_0=0, \omega_0 =\text{curl }u_0\in Y^{\theta}_{ul}(\R^2)$, there exists a weak solution $(u,p)$ of \eqref{E}
such that 
$$
\omega \in L^{\infty}_{loc}([0,\infty);Y^{^\theta}_{ul}(\R^2)), u \in L^{\infty}_{loc}([0,\infty);L^{\infty}(\R^2)),
$$
where 
\begin{equation}
\label{Pressure relation}
p=\sum_{k,l=1}^2 R_kR_lu^ku^l \text{ in } BMO(\R^2).
\end{equation}
Moreover, if $\theta$ satisfies 
\begin{equation}
\label{uniqueness growth}
\int_1^{\infty} \frac{dp}{p\theta(p)}=\infty,
\end{equation}
then the solution is unique.
\end{theorem}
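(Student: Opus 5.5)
The plan for Theorem \ref{Taniuchi Main theorem} has two parts: existence by compactness, and uniqueness by a localized Yudovich energy argument closed with Lemma \ref{beta lemma} and an Osgood argument.

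\textbf{Existence.} Mollify the data: set $u_0^\eps = \rho_\eps * u_0$, so that $\|u_0^\eps\|_{L^\infty}\le \|u_0\|_{L^\infty}$ and $\|\omega_0^\eps\|_{Y^\theta_{ul}}\lesssim \|\omega_0\|_{Y^\theta_{ul}}$. Since $\omega_0^\eps$ is bounded, the bounded-vorticity theory gives smooth global solutions $u^\eps$ with $u^\eps\in L^\infty$ (Serfati-type solutions). We then need uniform bounds.

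\emph{(a) Velocity bound.} Split the Biot--Savart law with a cutoff at scale $\lambda$. The local part is controlled by $\lambda^{1-2/p}\|\omega\|_{L^p_{ul}}$ for some $p>2$. The far part is handled through the Serfati identity: it is bounded by $\|u\|_{L^\infty}$ times a factor involving $\nabla\nabla^\perp$ of the kernel and $\lambda$, plus the far-field contribution of $u_0$.

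\emph{(b) Vorticity bound.} Since $\omega$ is transported, we localize $\|\omega(t)\|_{L^p_{ul}}$ using the flow. A ball of radius $1$ is carried into a ball of radius $1+2t\|u\|_{L^\infty}$, which is covered by $O((1+t\|u\|_\infty)^2)$ unit balls. This gives $\|\omega(t)\|_{L^p_{ul}}\le C(1+t\|u\|_\infty)^{2/p}\|\omega_0\|_{L^p_{ul}}$.

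\emph{(c) Closing the estimates.} Combining (a) and (b) and choosing $p$ and $\lambda$ appropriately gives a differential or integral inequality for $\|u(t)\|_{L^\infty}$. Condition \eqref{existence growth} is exactly what prevents blow-up: choosing $p$ depending on the size of $\|u\|_\infty$ yields an inequality of the form $\frac{d}{dt}\|u\|\lesssim \theta(\log\|u\|)$-type growth, and $\int dp/\theta=\infty$ excludes finite-time blowup.

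\emph{(d) Passage to the limit.} With uniform bounds on $u^\eps$ in $L^\infty$ and on $\omega^\eps$ in $Y^\theta_{ul}$, Aubin--Lions applied locally gives strong $L^2_{loc}$ convergence. This lets us pass to the limit in the weak formulation, Definition \ref{weak solution}. The pressure formula \eqref{Pressure relation} follows because $R_kR_l$ is bounded from $L^\infty$ to $BMO$.

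\textbf{Uniqueness.} Let $\bar u=u_1-u_2$. Work with the weighted energy $E(t)=\sup_z\int \phi_{z,\lambda}^2|\bar u|^2$, or with a fixed decaying weight. Computing $\frac{d}{dt}\int \phi^2|\bar u|^2$ produces three kinds of terms:
\begin{itemize}
\item a stretching term $\int\phi^2 \bar u\cdot\nabla u_2\,\bar u$, which mirrors \eqref{Yudovich 1};
\item transport terms that land on $\nabla\phi$, bounded by $\lambda^{-1}\|u\|_\infty$ times $E$;
\item a pressure term $\int \bar p\,\bar u\cdot\nabla\phi^2$.
\end{itemize}

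The stretching term is handled by Lemma \ref{beta lemma}. Take $f=\phi^2|\bar u|^2$, which is bounded by $4\max\|u_i\|_\infty^2$, and $g=|\nabla u_2|$ localized, using $\|\nabla u_2\|_{L^p_{ul}}\lesssim p\,\theta(p)$ (Calderón--Zygmund applied locally, plus the $L^\infty$ bound on $u$). This gives a bound $\beta_{M,\phi}(E)$ with $\phi(p)=p\theta(p)$.

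\textbf{Main obstacle.} The hardest step is the pressure term, because $\bar p$ is only in $BMO$ and is nonlocal. I would decompose $\bar p$ near and far using the Serfati-type kernel splitting, so that the far part is controlled by $\|\bar u\|_{L^2_{ul}}$. Since the weighted $L^2_{ul}$ sums over translates do not close at a single scale, I would let $\lambda\to\infty$ suitably with $t$ to absorb these errors.

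This yields $E(t)\le C\int_0^t \beta_{M,\phi}(E)\,ds$ plus terms that are small or linear in $E$. Condition \eqref{uniqueness growth} makes $\int_0 ds/\beta=\infty$ by \eqref{integral equation for theta}, and Osgood's lemma then gives $E\equiv0$.
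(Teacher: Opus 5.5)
The paper does not prove this theorem; it is quoted from Taniuchi. The relevant comparisons are the appendix, which reruns Taniuchi's a priori argument for \eqref{NS}, and the energy estimate of Section 3. Your architecture matches these. If ``Calder\'on--Zygmund applied locally'' means the div--curl estimate for the compactly supported field $\eta u$, it gives $\|\nabla u\|_{L^p_{ul}}\lesssim p(\|\omega\|_{L^p_{ul}}+\|u\|_{L^\infty})$ more cheaply than the paper's route through \eqref{FLSI} and Proposition \ref{Local CZO}.

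\textbf{The gap in step (c).} Step (c) is exactly where \eqref{existence growth} enters, and you do not carry it out; the mechanism you state is also not the right one. Estimates (a)--(b) give a discrete doubling scheme, not a differential inequality. At time $s$ with velocity bound $g=g(s)$, take $p\sim\log g$ so that the factor $(1+tg)^{2/p}$ in (b) is $O(1)$. The far Serfati term is quadratic, $\lesssim (t-s)g^2/\lambda$, so the admissible step is $t-s\lesssim\lambda/g$. To make the step long while keeping the local term $\le g$, you need $\lambda\approx g/\|\omega\|_{L^p_{ul}}\gg 1$. In that regime the local term is $\lesssim\lambda\|\omega\|_{L^p_{ul}}$, not $\lambda^{1-2/p}\|\omega\|_{L^p_{ul}}$. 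If you keep $\lambda\le 1$, the steps are $\lesssim 1/g$; since $g$ grows geometrically along the iteration, they sum to a finite time. With the correct choice, $g$ grows by at most a fixed factor over a step of length at least $c\,\theta(p)^{-p/(p-2)}$, with $c$ depending only on the data and not on $g$. Iterating with $p_k=k+4$ gives $g(t_k)\le(2C_0)^kg(0)$ and $t_k\gtrsim\sum_{j<k}\theta(j+4)^{-(j+4)/(j+2)}$. This sum diverges under \eqref{existence growth} after discarding the $n$ with $\theta(n)>e^n$; this is \eqref{tk definition}--\eqref{ns final uniform velocity}. Heuristically it says $\frac{d}{dt}\log\|u\|_{L^\infty}\lesssim\theta(\log\|u\|_{L^\infty})$. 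Your $\frac{d}{dt}\|u\|_{L^\infty}\lesssim\theta(\log\|u\|_{L^\infty})$ would instead exclude blow-up whenever $\int^\infty e^y\,dy/\theta(y)=\infty$, for example for $\theta(p)=p^2$. So it is not what (a)--(b) produce, and \eqref{existence growth} plays no role in it.

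\textbf{The pressure in the uniqueness part.} The pressure obstacle you describe does not exist, and sending $\lambda\to\infty$ is neither specified nor needed. You must use $\bar p=\sum_{k,l}R_kR_l(u_1^ku_1^l-u_2^ku_2^l)$ explicitly. Without this normalization uniqueness fails: with $h(0)=h'(0)=0$, the field $u(t,x-h(t))+h'(t)$ with pressure $p(t,x-h(t))-h''(t)\cdot x$ is another weak solution with bounded velocity, vorticity in $Y^\theta_{ul}$, and the same data. With the normalization, split $-G=-aG-(1-a)G$ as in Section 3. For the near part, integrate by parts using $\text{div}\,\bar u=0$ to move the gradient onto $\phi_z^2$. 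Then use that $\partial_i\partial_j(aG)$ is supported in $B_2(0)$ and has bounded Fourier transform (Lemma \ref{near field pressure kernel}). For the far part, $\nabla\partial_i\partial_j((1-a)G)\in L^1$. Since $u_1\otimes u_1-u_2\otimes u_2=\bar u\otimes u_1+u_2\otimes\bar u$, both pieces are bounded by $C(\|u_1\|_{L^\infty}+\|u_2\|_{L^\infty})\|\bar u\|_{L^2_{ul}}^2$ at unit scale. Such linear terms are harmless: $\beta_{M,\phi}(s)\gtrsim s$ for $s\le M$, or you can use the exponential substitution of \eqref{ODE for Y}. So the estimate closes at $\lambda=1$ as $E'\le C\beta_{M,\phi}(E)+CE$, and Osgood finishes as you say.
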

\begin{remark} In \cite{taniuchi2004}, the authors state in Remark $1$ that a similar result holds for solutions to the Navier-Stokes equations too. They claim that, if $\theta$ is of the form 
\[
\theta(p) = \underbrace{\log(e+p)\cdot \log(e+\log(e+p))\cdot \log(e+\log(e+\log(e+p)))\cdots,}_{\text{$k$ times iterated}}
\] 
then the solution to \eqref{NS} satisfies
\begin{equation}
\label{NS uly}
\norm{u^{\nu}(t)}{L^{\infty}} \leq C\norm{u_0}{L^{\infty}} \underbrace{\exp C \exp C \cdots \exp}_{\text{$k+2$ times}}\left\{Ct(\norm{u_0}{L^{\infty}} +1)(\norm{\omega_0}{Y^{\theta}_{ul}}+1)^2\right\}.
\end{equation}
In the appendix, we extend this result to non-decreasing $\theta$ satisfying \eqref{existence growth}.
\\
\\
It is also important to note that estimate \eqref{NS uly} is independent of $\nu$. In particular, when we are establishing the inviscid limit, we are able to consider global solutions to the Euler and Navier-Stokes equations with regularity $(u,\omega) \in L^{\infty}(\R^2) \times Y^{\theta}_{ul}(\R^2)$, with uniform bounds that do not depend on the viscosity.
\end{remark}
\subsection{Main Theorem} We state our main theorem below. 
\begin{theorem}
\label{main theorem}
Suppose that $u_0 \in L^{\infty}(\R^2)$ is divergence free and $\omega_0=\text{curl }u_0 \in Y^{\theta}_{ul}(\R^2)$ where $\theta$ is admissible in the sense of Definition \eqref{admissible theta}. 
Let $u^{\nu}$ and $u$ be global mild and weak solutions to \eqref{NS} and \eqref{E} respectively, generated from the same initial data, with $p^{\nu}$ and $p$ satisfying \eqref{Pressure relation}. Then for any $T>0$, the solution of \eqref{NS} converges to the solution of \eqref{E} in $L^{\infty}([0,T];L^2_{ul}(\R^2))$. In particular  
\begin{equation}
\label{main theorem equation}
\lim_{\nu \to 0^+} \sup_{t \in [0,T]} \norm{(u^{\nu}-u)(t)}{L^2_{ul}} = 0.
\end{equation}
\end{theorem}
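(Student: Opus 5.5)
We will run a localized energy estimate for the difference $w=u^{\nu}-u$, weighted by the bump $\phi_{z,\lambda}^2$. The aim is to close an Osgood-type differential inequality in the spirit of \eqref{Yudovich 2}, with an additional forcing term that vanishes as $\nu\to 0^+$.

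\textbf{Setup and uniform bounds.} By the Remark following Theorem \ref{Taniuchi Main theorem} (extended in the appendix), on $[0,T]$ we have bounds for $\norm{u^{\nu}}{L^\infty}$, $\norm{u}{L^\infty}$, $\norm{\omega^\nu}{Y^\theta_{ul}}$ and $\norm{\omega}{Y^\theta_{ul}}$ that are independent of $\nu$. Call the common bound $A$. We first mollify or otherwise justify that the difference $w$ solves, in the sense of distributions,
\begin{equation*}
\partial_t w + u^\nu\cdot\nabla w + w\cdot\nabla u = \nu\Delta u^\nu - \nabla(p^\nu-p).
\end{equation*}
We test this against $\phi_{z,\lambda}^2 w$ for a large fixed scale $\lambda$. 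Writing $E_z(t)=\norm{\phi_{z,\lambda}w(t)}{L^2}^2$, this gives
\begin{equation*}
\begin{aligned}
\tfrac12 \tfrac{d}{dt}E_z &= -\int \phi_{z,\lambda}^2 (w\cdot\nabla u)\cdot w + \tfrac12\int |w|^2 u^\nu\cdot\nabla\phi_{z,\lambda}^2 \\
&\quad + \int (p^\nu-p)\, w\cdot\nabla\phi_{z,\lambda}^2 + \nu\int \Delta u^\nu\cdot \phi_{z,\lambda}^2 w .
\end{aligned}
\end{equation*}

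\textbf{Estimating the terms.} The viscous term we integrate by parts: $-\nu\int\nabla u^\nu:\nabla(\phi^2 w)$. Using $\norm{\nabla u^\nu}{L^2_{ul}}\lesssim \norm{\omega^\nu}{L^2_{ul}}+\norm{u^\nu}{L^\infty}$ (local Calderón–Zygmund), and the same for $u$, this term is $O(\nu)$ uniformly in $z$. The transport boundary term is bounded by $\frac{CA}{\lambda}\sup_{z'}E_{z'}$, since $|\nabla\phi_{z,\lambda}^2|\lesssim \lambda^{-1}\phi_{z,\lambda}$, up to covering the support by finitely many translates. For the stretching term we apply Lemma \ref{beta lemma} with $f=\phi_{z,\lambda}^2|w|^2$, so that $\norm{f}{L^\infty}\le 4A^2=:M$ and $\norm{f}{L^1}=E_z$, and with $g=|\nabla u|\,\chi_{\supp \phi_{z,\lambda}}$. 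We have $\norm{g}{L^p}\lesssim p\,\theta(p)$, using $\norm{\nabla u}{L^p(B)}\lesssim p\norm{\omega}{L^p_{ul}}+\norm{u}{L^\infty}$ on balls. This gives a bound $\beta_{M,\phi}(E_z)$, with $\phi$ a constant multiple of $p\theta(p)$.

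\textbf{The pressure term (main obstacle).} This term is nonlocal, because $p^\nu-p=\sum R_kR_l(u^{\nu,k}u^{\nu,l}-u^ku^l)$ is only in BMO. Here we must obtain a bound of the form $\lambda^{-1}\cdot(\text{something controlled by } \sup_z E_z)$. We split the kernel of $R_kR_l$ into a near part (region $|x-y|<2\lambda$ around the support) and a far part. We write $u^{\nu}\otimes u^{\nu}-u\otimes u = w\otimes u^\nu+u\otimes w$.
\begin{itemize}
\item The near part is bounded in $L^2$ by $CA\,\norm{w}{L^2_{ul}}$, so its contribution is $\frac{CA}{\lambda}\sup_{z'}E_{z'}$.
\item For the far part, subtracting a constant (allowed because $\int w\cdot\nabla\phi^2$ need not vanish, so instead we use $\mathrm{div}\,w=0$ and rewrite the pairing as $-\int \phi^2 w\cdot\nabla(\cdot)$), the gradient of the far-field kernel decays like $|x-y|^{-3}$. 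It is then summable against uniformly local $L^1$ bounds of $|w|$, each bounded by $\lambda\,E^{1/2}$.
\end{itemize}
Altogether the pressure term is bounded by $C(A)\lambda^{-1}\bigl(\sup_{z'}E_{z'}\bigr)^{1/2}$ times constants, plus the same type of term. If this splitting does not yield a clean bound, we fall back on the weaker estimate $C(A)\lambda^{-\delta}$ and send $\lambda\to\infty$ at the end.

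\textbf{Closing.} Set $E(t)=\sup_z E_z(t)$ and integrate in time. This gives
\begin{equation*}
E(t)\le C\nu T + \frac{C}{\lambda}\int_0^t \bigl(E+E^{1/2}\bigr)\,ds + C\int_0^t \beta_{M,\phi}(E(s))\,ds .
\end{equation*}
Since $\beta_{M,\phi}(s)\gtrsim s$ (as in the proof of the preceding Lemma), and also $\gtrsim s^{1/2}$ for small $s$, the $\lambda$-terms are absorbed into a slightly larger $\beta$-type modulus $\tilde\beta=C'\beta_{M,\phi}$ that is still admissible. The equivalence of the norms for different $\lambda$ then lets us fix $\lambda$. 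An Osgood comparison argument now gives $E(t)\le \Gamma^{-1}\bigl(\Gamma(C\nu T)-Ct\bigr)$, where $\Gamma(x)=\int_x^1 ds/\tilde\beta(s)$. Admissibility forces $\Gamma(C\nu T)\to\infty$ as $\nu\to0^+$, hence $E\to0$ uniformly on $[0,T]$, which is \eqref{main theorem equation}.
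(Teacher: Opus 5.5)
\paragraph{A false inequality in the closing step.}
Your architecture matches the paper's:
\begin{itemize}
\item a localized energy estimate for $w=u^{\nu}-u$ with weight $\phi_z^2$;
\item Lemma \ref{beta lemma} for the stretching term, with $M\sim A^2$ and $\phi\sim p\theta(p)$;
\item a near/far split of the pressure;
\item an $O(\nu)$ viscous term, and Osgood to finish.
\end{itemize}
Your local div--curl bound for $\nabla u$ on balls is a fine, more elementary substitute for Theorem \ref{velocity gradient estimate}. Its lower-order term is really $p\norm{u}{L^\infty}$ rather than $\norm{u}{L^\infty}$, which is harmless.

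The genuine gap is in the closing step. You carry a term $\tfrac{C}{\lambda}\int_0^t E^{1/2}\,ds$ and absorb it by asserting $\beta_{M,\phi}(s)\gtrsim s^{1/2}$ for small $s$. This is false, and it is incompatible with admissibility.
\begin{itemize}
\item Already $\eps=1/3$ in \eqref{Beta definition} gives $\beta_{M,\phi}(s)\le M^{1/3}\phi(3)\,s^{2/3}=o(s^{1/2})$.
\item Any modulus $\tilde\beta\ge c\,s^{1/2}$ near $0$ has $\int_0^1 ds/\tilde\beta(s)<\infty$, so it cannot be admissible.
\item This is not a technicality. The function $E(t)=(Ct/2)^2$ satisfies $E(t)\le C\int_0^t E^{1/2}\,ds$ with no $\nu$-forcing at all. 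Once such a term is present, nothing forces $E\to 0$ as $\nu\to 0^+$.
\end{itemize}
The fallback $C(A)\lambda^{-\delta}$ does not rescue this. For fixed $\lambda$ it is a forcing term that does not vanish with $\nu$. Sending $\lambda\to\infty$ brings in $\lambda$-dependent constants in two places: in the $\beta$-term, since $\norm{\nabla u}{L^p(B_{2\lambda})}$ grows like $\lambda^{2/p}$, and in comparing the scale-$\lambda$ quantity with $\norm{w}{L^2_{ul}}^2$. So it is not clear that this closes.

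\paragraph{The fix: keep the pressure term quadratic in $w$.}
Every term must be quadratic in $w$, and the pressure term is. Indeed $u^\nu\otimes u^\nu-u\otimes u=w\otimes u^\nu+u\otimes w$ is linear in $w$ with coefficients bounded by $A$, and it is paired with $w$ again. So you must not spend a factor of $w$ on its $L^\infty$ bound. The paper bounds both pieces by $C(\norm{u^\nu}{L^\infty}+\norm{u}{L^\infty})\norm{w}{L^2_{ul}}^2$:
\begin{itemize}
\item \emph{Near field} (kernel $\partial_i\partial_j(aG)$): it moves the gradient onto $\phi_z^2$ using $\mathrm{div}\,w=0$, localizes the argument with $\phi_{z,8}$, and uses that $\mathcal{F}(\partial_i\partial_j(aG))\in L^\infty$ (Lemma \ref{near field pressure kernel}).
\item \emph{Far field}: it uses $\nabla\partial_i\partial_j((1-a)G)\in L^1$, so convolution with it is bounded on $L^2_{ul}$.
\end{itemize}
Your own far-field computation gives the same bound when done carefully. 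Pairing $\norm{\phi_{z,\lambda}^2 w}{L^1}\lesssim \lambda E_z^{1/2}$ against $\sup|\nabla p_{far}|\lesssim A\lambda^{-2}(\sup_{z'}E_{z'})^{1/2}$ yields $CA\lambda^{-1}\sup_{z'}E_{z'}$.

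Note also that your remark on constants is backwards: $\int w\cdot\nabla\phi^2=-\int\phi^2\,\mathrm{div}\,w=0$ \emph{does} vanish. That is exactly why the BMO ambiguity in the pressure is harmless.

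With only linear extra terms, your absorption into $\beta$ works. Use $\beta_{M,\phi}(s)\ge 2s$ on $(0,M]$, enlarging $M$ if needed so this range contains all values of $E$; admissibility is unaffected. This is a valid alternative to the paper's integrating factor $Y=e^{-\int_0^t C_1}L$, the large scale $\lambda$ becomes unnecessary, and your Osgood step then concludes as written.
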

We remark on a few key difficulties that we must overcome when proving Theorem \ref{main theorem}. The first is establishing a uniformly local estimate on the $L^p$-norms of the velocity gradient which grows like $\frac{p^2}{p-1}\theta(p)$. When the vorticity has decay, one has the relation $u(t)=K\ast\omega(t)$ where $K$ is the Biot-Savart kernel. Note, even if $\omega \in C^{\infty}(\R^2)$, this expression may cease to make sense. Hence, we need to utilize a constitutive law that makes sense for vorticity which lacks spatial decay. This was first done by Serfati in \cite{serfati1995solutions}. The idea is to analyze the evolution of $\partial_tu$, utilize the Biot-Savart law, split the effects of the convolution into near and far field parts, and in the far field use the PDE. Then, integrating in time, one obtains
\begin{equation}
\label{serfati identity 1}
u^j(t)=u_0^j + (aK^j) \ast (\omega(t)-\omega_0) - \int_0^t \nabla \nabla^{\perp}[(1-a)K^j]\ast\cdot (u \otimes u)(s)ds,
\end{equation}
which makes sense for merely bounded vorticity and velocity. We call the relation in \eqref{serfati identity 1} the \emph{Serfati-type identity}. Hence, in order to obtain a desirable estimate on the velocity gradient, we must utilize this identity, which, as a byproduct, results in needing to estimate $L^p$ operator norms of a localized Calder\'{o}n-Zygmund operator. Another challenge is estimating the pressure. In the classical setting one has that 
$$
\int_{\R^2}u(t,x)\cdot\nabla p(t,x)\,dx=0,
$$
which follows from integrating by parts and the incompressibility condition. However, after multiplying by a suitable localization function, this integral does not vanish. To overcome this, we make use of the relation from \cite{taniuchitashirotomoya2010}
\begin{equation}
\label{intro pressure}
p=\sum_{j,k=1}^2 R_jR_ku^ju^k,
\end{equation}
which holds in $BMO(\R^2)$ provided that $u \in L^{\infty}(\R^2)$. As this relation is  a Calder\'{o}n-Zygmund operator of  $u\otimes u$, standard techniques fail, and we will employ a similar near/far-field decomposition strategy in order to estimate the pressure. 
\subsection{Notation and Conventions}
We next define some of the notations that we will use throughout this paper. We let $C>0$ denote an arbitrary constant which may change from line to line. If the constant depends on a set of parameters $\{\lambda_1,\cdots,\lambda_n\}$ and we aim to stress this dependence, we write $C(\lambda_1,\cdots,\lambda_n)$. If there exist constants $C_1$ and $C_2$ such that $A \leq C_1B$ and $B \leq C_2A$, we write $A \sim B$. We  use the convention
$\mathcal{F}(f)(\xi)=\hat{f}(\xi) := \int_{\mathbb{R}^d} f(x) e^{-2\pi i x \cdot \xi}\, dx $ to denote the Fourier transform of a tempered distribution $f:\mathbb{R}^d\to\mathbb{R}$. 
We let $a:\R^2 \to \R$ denote a smooth, radially symmetric bump function such that $a$ is identically $1$ in $B_1(0)$, is monotone decreasing for $1 \leq \abs{x} \leq 2$, and vanishes outside of $B_2(0)$.
\begin{definition}
\label{vector convolution definition}Given two vector fields $v,w$ we define $v \ast \cdot w = \sum_i v^i \ast w^i$. Similarly, for matrix-valued functions $A$ and $B$ on $\R^2$, we define $A\ast\cdot B=\sum_{i,j=1}^dA^{ij}\ast B^{ij}$.
\end{definition}
\section{Preliminaries}
\subsection{ODE Lemmas}

\begin{lemma}\label{Gronwall}
     (Gr\"onwall's Lemma) Let $T>0$ and let $L,\alpha,\beta$ be non-negative, continuous functions on $[0,T]$ with $\alpha$ nondecreasing. If for all $t\in[0,T]$, $$L(t)\leq \alpha(t)+\int_0^t \beta(s)L(s) \, ds,$$ then for all $t\in[0,T]$, $$L(t)\leq \alpha(t)\exp{\left(\int_0^t\beta(s)\,ds\right)}.$$ 
\end{lemma}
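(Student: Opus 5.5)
The natural route is the standard integrating-factor argument, with $\alpha$ frozen at a fixed time. Fix $t_0\in[0,T]$. For $t\in[0,t_0]$, monotonicity of $\alpha$ gives $\alpha(t)\le\alpha(t_0)$, so
$$
L(t)\le \alpha(t_0)+\int_0^t\beta(s)L(s)\,ds \quad\text{for all } t\in[0,t_0].
$$
This reduces the statement to the case of a constant forcing term $A=\alpha(t_0)\ge 0$.

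Set $R(t)=A+\int_0^t\beta(s)L(s)\,ds$. Since $\beta$ and $L$ are continuous, $R$ is $C^1$ with $R'(t)=\beta(t)L(t)$. Using $L\le R$ and $\beta\ge0$, we get $R'(t)\le\beta(t)R(t)$.

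Multiply by the integrating factor $\exp\left(-\int_0^t\beta(s)\,ds\right)$. Then
$$
\frac{d}{dt}\left[R(t)\exp\left(-\int_0^t\beta(s)\,ds\right)\right]\le 0,
$$
so this quantity is non-increasing on $[0,t_0]$. Its value at $t=0$ is $R(0)=A$. Hence $R(t)\le A\exp\left(\int_0^t\beta(s)\,ds\right)$ for $t\in[0,t_0]$.

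Evaluating at $t=t_0$ and using $L(t_0)\le R(t_0)$ gives $L(t_0)\le \alpha(t_0)\exp\left(\int_0^{t_0}\beta(s)\,ds\right)$. Since $t_0$ was arbitrary, this is the claimed bound.

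There is no real obstacle here. The only point needing care is the reduction in the first step: $\alpha$ must be frozen at the endpoint $t_0$ before running the differential inequality, because differentiating $R$ with a variable $\alpha(t)$ would require $\alpha$ to be differentiable, which is not assumed. Nonnegativity of $\beta$ is used exactly once, in passing from $\beta L$ to $\beta R$.
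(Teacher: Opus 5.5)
Your proof is correct and complete. Freezing $\alpha$ at $t_0$ and then showing that $R(t)\exp\bigl(-\int_0^t\beta(s)\,ds\bigr)$ is non-increasing is the standard integrating-factor proof.

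The paper gives no proof of this lemma: it records it as a standard preliminary, next to Osgood's Lemma, for which it refers to Chemin's book. So there is no alternative route to compare against.

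As a minor aside, your argument uses only $\beta\ge 0$ and enough regularity for $R$ to be absolutely continuous. It never uses the continuity of $\alpha$, nor the non-negativity of $\alpha$ or $L$.
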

We now state the more general Osgood's Lemma.  We refer the reader to \cite{Chemin95}, Chapter 5, for a proof of the lemma. 
\begin{lemma}\label{Osgood}
(Osgood's Lemma) Let $T>0 $, $\Psi$ a positive Borelian function, $\gamma$ a locally integrable positive function, and $\mu$ a continuous increasing function. Assume that for some number $\beta>0$ and for all $t\in[0,T]$, these functions satisfy $$\Psi(t)\leq\beta +\int_0^t\gamma(s)\mu(\Psi(s))\,ds.$$
Then for all $t\in[0,T]$ $$-\phi(\Psi(t))+\phi(\beta)\leq \int_0^t \gamma(s)\,ds,$$ where $\phi(x)=\int_x^1\frac{1}{\mu(r)}\,dr$. 
\end{lemma}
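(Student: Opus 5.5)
The plan is the standard comparison argument: replace $\Psi$ by the right-hand side of the hypothesis, which is absolutely continuous, and integrate the resulting differential inequality after dividing by $\mu$. Throughout, $\mu$ is understood to be positive on $(0,\infty)$, so that $\phi(x)=\int_x^1\frac{dr}{\mu(r)}$ is well defined, $C^1$ and strictly decreasing on $(0,\infty)$ with $\phi'(x)=-1/\mu(x)$.

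\emph{Step 1: the comparison function.} Set
$$
R(t)=\beta+\int_0^t\gamma(s)\mu(\Psi(s))\,ds .
$$
We may assume that $\gamma\,\mu(\Psi)$ is integrable on $[0,T]$; otherwise the hypothesis carries no information and the statement should be read with this implicit assumption. Then $R$ is absolutely continuous and nondecreasing, with $R(t)\geq \beta>0$. By hypothesis $\Psi(t)\le R(t)$. Since $\mu$ is increasing and $\gamma\ge 0$, for almost every $t$ we get
$$
R'(t)=\gamma(t)\mu(\Psi(t))\le \gamma(t)\mu(R(t)).
$$

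\emph{Step 2: the chain rule.} Because $R$ takes values in $[\beta,\infty)$, where $\phi$ is $C^1$ with bounded derivative on compact subsets, the composition $\phi\circ R$ is absolutely continuous on $[0,T]$. Its derivative satisfies, almost everywhere,
$$
\frac{d}{dt}\bigl(-\phi(R(t))\bigr)=\frac{R'(t)}{\mu(R(t))}\le \gamma(t).
$$
Integrating from $0$ to $t$ and using $R(0)=\beta$ gives
$$
-\phi(R(t))+\phi(\beta)\le\int_0^t\gamma(s)\,ds.
$$

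\emph{Step 3: return to $\Psi$.} Since $\phi$ is decreasing and $\Psi(t)\le R(t)$, we have $-\phi(\Psi(t))\le -\phi(R(t))$, which yields the claim.

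The only delicate point is justifying the chain rule and differentiation in Step 2 for a merely Borel $\Psi$ and a merely locally integrable $\gamma$. This is handled by working with the absolutely continuous majorant $R$ rather than with $\Psi$ itself, together with the lower bound $R\ge\beta>0$, which keeps $1/\mu$ continuous and bounded along the range of $R$. Should one want to avoid the a.e.\ chain rule, an alternative is to approximate $\gamma$ by continuous functions and pass to the limit, but the absolute-continuity route suffices.
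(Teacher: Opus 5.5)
Your proposal is correct and is essentially the standard proof from Chemin's book, which the paper cites in place of its own proof. That argument majorizes $\Psi$ by the absolutely continuous function $R(t)=\beta+\int_0^t\gamma(s)\mu(\Psi(s))\,ds$, integrates $-\frac{d}{dt}\phi(R(t))=R'(t)/\mu(R(t))\le\gamma(t)$, and concludes using that $\phi$ is decreasing. Your caveat that $\gamma\,\mu(\Psi)$ must be integrable on $[0,T]$ is the right reading of the statement: that argument uses it tacitly, and without it the conclusion can fail once the right-hand side of the hypothesis becomes infinite.
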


\subsection{Littlewood-Paley Theory} 
Before defining our function spaces, we begin with a brief overview of the Littlewood-Paley operators. We closely follow \cite{Chemin95}.\\
\\
It is well known that if $\mathcal{C}$ is the annulus of center $0$, inner radius $3/5$, and outer radius $5/3$, then there exist two positive radial functions $\hat{\chi} \in C_0^{\infty}(B_{5/6}(0))$ and $\hat{\varphi}\in C_0^{\infty}(\mathcal{C})$ such that, if $j \in \mathbb{Z}$, then we set $\varphi_j(x)=2^{jd}\varphi(2^jx)$ and
$$
\hat{\chi}+\sum_{j\geq 0} \hat{\varphi}_j = \hat{\chi} + \sum_{j \geq 0} \hat{\varphi}(2^{-j}\cdot)
 \equiv 1.$$
For $n\geq 0$ we define $\chi_n$ by $\chi_n(x)=2^{nd}\chi(2^nx)$, so that its Fourier transform satisfies
$$
\hat{\chi}_n(\xi) = \hat{\chi}(2^{-n}\xi) \text{ for } \xi \in \R^d.
$$
For a tempered distribution $f \in \mathcal{S}'(\R^d)$, we define the operator $S_n$ as 
$$
S_n f =\chi_n \ast f.
$$
Formally, $S_n$ can be thought of as a projection in Fourier space into the ball of radius of order $2^n$. For $f \in \mathcal{S}'(\R^d)$ we define the inhomogeneous Littlewood-Paley operators $\Delta_j$ by 
\begin{equation*}
\label{LP operator}
\Delta_j f
=
\begin{cases}
    0, \hspace{1cm} &\text{if }j < -1,\\
    \chi \ast f, &\text{if } j=-1, \\
    \varphi_j \ast f, &\text{if } j\geq 0,
\end{cases}
\end{equation*}
and for all $j\in \mathbb{Z}$ we define the homogeneous Littlewood-Paley operators $\dot{\Delta}_j$ by
$$
\dot{\Delta}_j f = \varphi_j \ast f.
$$
 Up to constants, one can formally think of $\dot{\Delta}_j$ as a projection in Fourier space into the annulus of inner and outer radius of order $2^j$. For any $u \in \mathcal{S}'(\R^d)$, we have that $u = \lim_{n \to \infty} S_nu$ where the limit holds in $\mathcal{S}'(\R^d)$. Or equivalently
$$
u = \sum_{j\geq -1} \Delta_j u \text{ in } \mathcal{S}'(\R^d).
$$
One benefit of using Littlewood-Paley operators can be seen in Bernstein's Lemma. Heuristically, Bernstein's Lemma states that if the support of $\hat{u}$ is included in some ball $B_{\lambda}(0)$, then a derivative of $u$ costs no more than a power of $\lambda$ when taking an $L^p$ norm. This will be made precise below. We let $\mathcal{C}_{a,b}(0)$ denote the annulus with inner radius $a$ and outer radius $b$.

\begin{lemma}(Bernstein's Lemma) Let $(r_1,r_2)$ be a pair of strictly positive real numbers such that $r_1<r_2$ and let $\lambda$ denote a positive real number. Then there exists a constant $C>0$, such that, for every integer $k$ with $k \geq 0$, every pair of real numbers $(a,b)$ satisfying $b \geq a \geq 1$, and every function $u \in L^a(\R^d)$,
\begin{enumerate}

\item 
\label{Bernstein in Ball}
If $\supp{\hat{u}} \subset B_{r_1\lambda}(0)$ then
\begin{equation*}
 \sup_{\abs{\alpha} = k} \norm{\partial^{\alpha}u}{L^b} \leq C^k\lambda^{k+d(\frac{1}{a}-\frac{1}{b})}\norm{u}{L^a}.
\end{equation*}\\
\item 
\label{Bernstein in Annulus}
Furthermore, if $\supp{\hat{u}} \subset \mathcal{C}_{r_1\lambda, r_2\lambda}(0)$ then
\begin{equation*}
 C^{-k}\lambda^k\norm{u}{L^a} \leq  \sup_{\abs{\alpha} = k} \norm{\partial^{\alpha}u}{L^a} \leq C^k\lambda^k\norm{u}{L^a}.
\end{equation*}
\end{enumerate}
\end{lemma}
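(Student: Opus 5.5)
This is the classical Bernstein lemma, and the plan is to reduce both parts to a fixed frequency scale by dilation and then use Young's inequality with a fixed Schwartz multiplier.

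\textbf{Scaling.} If $\supp\hat u\subset B_{r_1\lambda}(0)$, set $u_\lambda(x)=u(x/\lambda)$. Then $\supp\widehat{u_\lambda}\subset B_{r_1}(0)$, $\norm{u_\lambda}{L^a}=\lambda^{d/a}\norm{u}{L^a}$, and $\partial^\alpha u_\lambda=\lambda^{-|\alpha|}(\partial^\alpha u)(\cdot/\lambda)$. Tracking these factors produces exactly $\lambda^{k+d(1/a-1/b)}$, so it suffices to prove the estimates with $\lambda=1$. The annulus case scales the same way.

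\textbf{Part (1) at $\lambda=1$.} Fix $\hat g\in C_0^\infty$ with $\hat g\equiv 1$ on $B_{r_1}(0)$. Then $u=g\ast u$, so $\partial^\alpha u=(\partial^\alpha g)\ast u$. Choose $c$ with $1+\frac1b=\frac1c+\frac1a$; since $b\ge a$ we have $1\le c\le\infty$. Young's inequality gives
\[
\norm{\partial^\alpha u}{L^b}\le\norm{\partial^\alpha g}{L^c}\norm{u}{L^a}.
\]
We have $\norm{\partial^\alpha g}{L^c}\le\norm{\partial^\alpha g}{L^1}^{1/c}\norm{\partial^\alpha g}{L^\infty}^{1-1/c}$, so it is enough to bound $\norm{\partial^\alpha g}{L^1}$ and $\norm{\partial^\alpha g}{L^\infty}$ by $C^k$.
\begin{itemize}
\item For $L^\infty$, use $|\partial^\alpha g(x)|\le\int|(2\pi\xi)^\alpha\hat g|\,d\xi\le C^k$, because $\hat g$ has compact support.
\item For $L^1$, write $\norm{\partial^\alpha g}{L^1}\le\norm{(1+|x|^2)^d\partial^\alpha g}{L^\infty}\int(1+|x|^2)^{-d}\,dx$. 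The first factor is controlled by $\norm{(1-\Delta_\xi)^d[\xi^\alpha\hat g]}{L^1}$. By Leibniz, each of the boundedly many derivatives hitting $\xi^\alpha$ costs at most a factor $k^{2d}$, so the total is $\lesssim k^{2d}R^k\le C^k$.
\end{itemize}

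\textbf{Part (2) at $\lambda=1$.} The upper bound is part (1) with $b=a$. For the lower bound, pick $\hat\psi\in C_0^\infty$ supported away from the origin with $\hat\psi\equiv1$ on $\mathcal{C}_{r_1,r_2}(0)$. Use the identity
\[
|\xi|^{2k}=\sum_{|\alpha|=k}\frac{k!}{\alpha!}\,\xi^{2\alpha}.
\]
For each $|\alpha|=k$ define $\hat g_\alpha(\xi)=\frac{k!}{\alpha!}(-i\xi)^\alpha|\xi|^{-2k}\hat\psi(\xi)$; up to the $2\pi$ factors, $\sum_\alpha\hat g_\alpha\,(i\xi)^\alpha\hat u=\hat u$. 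This gives
\[
u=\sum_{|\alpha|=k} g_\alpha\ast\partial^\alpha u,
\]
and hence
\[
\norm{u}{L^a}\le\Bigl(\sum_{|\alpha|=k}\norm{g_\alpha}{L^1}\Bigr)\sup_{|\alpha|=k}\norm{\partial^\alpha u}{L^a}.
\]

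\textbf{Main obstacle.} The hard step is showing $\sum_{|\alpha|=k}\norm{g_\alpha}{L^1}\le C^k$ uniformly in $k$. The plan is the same weighted $L^\infty$ trick as in part (1): apply $(1-\Delta_\xi)^d$ to the smooth compactly supported symbol $\xi^\alpha|\xi|^{-2k}\hat\psi$.
\begin{itemize}
\item On the support of $\hat\psi$, $|\xi|$ lies between two fixed radii, so $|\xi^\alpha||\xi|^{-2k}\le|\xi|^{-k}\le C^k$.
\item Each $\xi$-derivative produces at most polynomial factors in $k$.
\item The combinatorial weights satisfy $\sum_{|\alpha|=k}k!/\alpha!=d^k$, and there are at most $(k+1)^d$ multi-indices with $|\alpha|=k$.
\end{itemize}
All of these factors are absorbed into $C^k$. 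Rescaling back to general $\lambda$ then yields the stated inequalities.
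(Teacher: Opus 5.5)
\textbf{Approach.} Your proposal is the standard proof of Bernstein's lemma: rescale to $\lambda=1$, write $\partial^\alpha u=(\partial^\alpha g)\ast u$ and apply Young's inequality with the weighted bound through $(1-\Delta_\xi)^d$, and use the multinomial identity $|\xi|^{2k}=\sum_{|\alpha|=k}\frac{k!}{\alpha!}\xi^{2\alpha}$ for the reverse inequality on the annulus. The paper gives no proof of its own and defers to \cite{Chemin95}, where this is the standard argument, so your route is the intended one.

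\textbf{The step that fails.} You repeatedly absorb $k$-independent constants into $C^k$. Examples are $\norm{\partial^\alpha g}{L^\infty}\le(2\pi R)^k\norm{\hat g}{L^1}\le C^k$, $\norm{\partial^\alpha g}{L^1}\le C_0(1+k)^{2d}(2\pi R)^k\le C^k$, and $\sum_{|\alpha|=k}\norm{g_\alpha}{L^1}\le C^k$. For $k\ge1$ this is harmless after enlarging $C$. At $k=0$, however, it demands constants $\le 1$.

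\textbf{The statement itself is false at $k=0$.} The problem cannot be repaired by a better choice of $g$. For $k=0$ and $b>a$, part (1) at $\lambda=1$ asserts $\norm{u}{L^b}\le\norm{u}{L^a}$ whenever $\supp\hat u\subset B_{r_1}(0)$. Take any nonzero $v$ with $\hat v\in C_c^\infty(B_1(0))$ and set $u=v(r_1\,\cdot)$. Then
\[
\frac{\norm{u}{L^b}}{\norm{u}{L^a}}=r_1^{d(1/a-1/b)}\,\frac{\norm{v}{L^b}}{\norm{v}{L^a}},
\]
which exceeds $1$ once $r_1$ is large.

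\textbf{What your proof actually establishes.} It proves the correct form of the lemma: $C^{k+1}$ on the right-hand side of (1), and $C^{-k-1}$ and $C^{k+1}$ in (2). This is how the lemma is usually stated, e.g.\ in Bahouri--Chemin--Danchin. Equivalently, you may keep $C^k$ for $k\ge1$ and note that the $k=0$ cases with $b=a$ are trivial identities; these include all of part (2).
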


\subsection{Uniformly Local Estimates}
Throughout the paper, we make extensive use of the following result and use it without remark. This result is well known. See, for example, Lemma $5$ in \cite{lazar2013global}.
\begin{proposition} Suppose that $1 \leq p \leq \infty, \mathcal{K} \in L^1(\R^d),$ and $f \in L^p_{ul}(\R^d)$. Then 
$$
\norm{\mathcal{K} \ast f}{L^p_{ul}} \leq \norm{\mathcal{K}}{L^1}\norm{f}{L^p_{ul}}.
$$
\end{proposition}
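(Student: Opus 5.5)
The goal is to prove $\norm{\mathcal{K}\ast f}{L^p_{ul}}\leq \norm{\mathcal{K}}{L^1}\norm{f}{L^p_{ul}}$. The plan is to fix $z\in\R^d$, estimate $\norm{\phi_z(\mathcal{K}\ast f)}{L^p}$, and then take the supremum over $z$. The natural route is Minkowski's integral inequality. We write
$$
\phi_z(x)(\mathcal{K}\ast f)(x)=\int_{\R^d}\mathcal{K}(y)\,\phi_z(x)f(x-y)\,dy,
$$
so that
$$
\norm{\phi_z(\mathcal{K}\ast f)}{L^p}\leq \int_{\R^d}|\mathcal{K}(y)|\,\norm{\phi_z(\cdot)f(\cdot-y)}{L^p}\,dy.
$$
It then suffices to show that $\norm{\phi_z(\cdot)f(\cdot-y)}{L^p}\leq \norm{f}{L^p_{ul}}$ for every $y$.

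For this we change variables $x'=x-y$. Since $\phi$ is radial, $\phi_z(x)=\phi(z-x)$, and so
$$
\phi_z(x'+y)=\phi(z-y-x')=\phi_{z-y}(x').
$$
Hence
$$
\norm{\phi_z(\cdot)f(\cdot-y)}{L^p}=\norm{\phi_{z-y}f}{L^p}\leq \sup_{w}\norm{\phi_w f}{L^p}=\norm{f}{L^p_{ul}}.
$$
This uses only the translation invariance of the family $\{\phi_w\}$. Inserting this bound gives $\norm{\phi_z(\mathcal{K}\ast f)}{L^p}\leq \norm{\mathcal{K}}{L^1}\norm{f}{L^p_{ul}}$ uniformly in $z$, and taking the supremum over $z$ yields the claim. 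The case $p=\infty$ is immediate from Young's inequality, because $L^\infty_{ul}=L^\infty$ by convention.

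The only point needing care is well-definedness. We must check that $\mathcal{K}\ast f$ converges absolutely almost everywhere when $f$ is merely in $L^p_{ul}$, with no global integrability. The plan is to run the estimate above with $|\mathcal{K}|$ and $|f|$ in place of $\mathcal{K}$ and $f$, using Tonelli in place of Minkowski. This shows that $\phi_z\,(|\mathcal{K}|\ast|f|)$ is finite in $L^p$ for each $z$, so $|\mathcal{K}|\ast|f|<\infty$ almost everywhere on every ball, since $\phi_z\equiv 1$ on $B_1(z)$. This justifies the interchange of integrals. I expect no real obstacle; this measurability and finiteness check is the step I would write out most carefully.
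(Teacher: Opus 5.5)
Your proof is correct and complete. The paper gives no argument of its own for this proposition; it is quoted as well known with a pointer to Lemma~5 of \cite{lazar2013global}. Your Minkowski-inequality argument is therefore a valid self-contained substitute rather than an alternative to an in-paper proof.

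It is worth noting why you get the constant exactly $1$. The identity $\phi_z(x'+y)=\phi_{z-y}(x')$ keeps you inside the same family of cutoffs, and this closes only because the norm is a supremum over all $z\in\R^d$. With a supremum over a lattice, or after passing through the norm equivalence for $\phi_{z,\lambda}$ in the Remark, the same argument would only give a dimensional constant.

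Two cosmetic points:
\begin{itemize}
\item Radiality of $\phi$ is never used. The identity $\phi_z(x'+y)=\phi(z-y-x')=\phi_{z-y}(x')$ is just the definition $\phi_z=\phi(z-\cdot)$.
\item In the well-definedness step, say explicitly that $\R^d$ is covered by countably many balls $B_1(z_k)$. This gives $|\mathcal{K}|\ast|f|<\infty$ almost everywhere on all of $\R^d$. The pointwise bound $|\phi_z(\mathcal{K}\ast f)|\le|\phi_z|\,(|\mathcal{K}|\ast|f|)$ then lets the Tonelli estimate for the absolute values control the signed convolution.
\end{itemize}
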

When estimating the pressure, we make use of the following lemma.
\begin{lemma}
\label{near field pressure kernel}
Suppose that $G$ is the fundamental solution of the Laplacian, then $\mathcal{F}(\partial_i\partial_j(aG))(\xi) \in L^{\infty}_{\xi}(\R^2)$.
\end{lemma}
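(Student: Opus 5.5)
Since $G(x)=\frac{1}{2\pi}\log|x|$ and $a$ is a smooth compactly supported bump, the plan is to split $\partial_i\partial_j(aG)$ into a compactly supported principal-value singular part plus an $L^1$ remainder, and bound the Fourier transform of each piece separately. By the product rule, in the sense of distributions,
\[
\partial_i\partial_j(aG) = a\,\partial_i\partial_j G + (\partial_i a)(\partial_j G) + (\partial_j a)(\partial_i G) + (\partial_i\partial_j a)\,G .
\]
The three terms containing derivatives of $a$ are supported in the annulus $1\le |x|\le 2$, where $G$ and $\nabla G$ are smooth and bounded. Each of them is therefore a bounded compactly supported function, hence in $L^1(\R^2)$, and its Fourier transform is bounded by its $L^1$ norm.

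It remains to treat $a\,\partial_i\partial_j G$, where $\partial_i\partial_j G$ is understood distributionally:
\[
\partial_i\partial_j G = \frac{1}{2}\delta_{ij}\delta_0 + \mathrm{p.v.}\,\frac{1}{2\pi}\frac{|x|^2\delta_{ij}-2x_ix_j}{|x|^4}\cdot(-1).
\]
Only the structure matters here: a multiple of the Dirac delta plus a principal-value kernel $\Omega(x/|x|)/|x|^2$, where $\Omega$ is smooth on the circle with mean zero. The delta part contributes a constant to the Fourier transform. I would not localize the principal-value part by hand. Instead, I would use that $\mathcal{F}(\partial_i\partial_j G)(\xi) = -\xi_i\xi_j/|\xi|^2$ (up to a sign convention) is bounded, and write
\[
a\,\partial_i\partial_j G = \partial_i\partial_j G - (1-a)\partial_i\partial_j G .
\]
The Fourier transform of the first term is bounded. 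The second term is the smooth function $(1-a)\,\Omega(x/|x|)/|x|^2$. It is not in $L^1$, since it decays only like $|x|^{-2}$.

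The main obstacle is therefore showing that the Fourier transform of $k(x)=(1-a)\,\Omega(x/|x|)/|x|^2$ is bounded. For $|\xi|\ge 1$ I would split at radius $1/|\xi|$; for $|\xi|<1$ I would instead use the splitting already done and compare with a truncation at radius $R=1/|\xi|$. Concretely, the standard computation
\[
\int_{\epsilon<|x|<R}\frac{\Omega(x/|x|)}{|x|^2}e^{-2\pi i x\cdot\xi}\,dx
\]
is bounded uniformly in $\epsilon$, $R$ and $\xi$ when $\Omega$ has mean zero and is bounded. This is the classical uniform bound for truncated Calder\'on--Zygmund kernels: the mean-zero cancellation handles $|x|<1/|\xi|$, and integration by parts in the radial variable handles $|x|>1/|\xi|$. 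Since $1-a$ is a smooth radial cutoff, $\widehat{k}$ is an average of such truncations with $\epsilon\in[1,2]$ (write $1-a(x)=\int_1^2 \mathbf{1}_{|x|>r}\,d\mu(r)$ for a probability measure $\mu$, using that $a$ is radially monotone), taking $R\to\infty$ in the distributional sense. Hence $\widehat{k}$ is bounded uniformly in $\xi$.

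Alternatively, one can bypass the $(1-a)$ piece entirely: apply the same averaging representation to $a$ itself, $a(x)=\int_1^2\mathbf{1}_{|x|<r}\,d\mu(r)$, and use the uniform bound for truncations $\epsilon\to0$, $R=r$ directly on $a\,\partial_i\partial_j G$. I would present this cleaner version.
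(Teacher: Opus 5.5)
Your argument is correct, but for the term $a\,\partial_i\partial_j G$ it takes a much longer, real-space route than the paper. The paper handles the three terms containing derivatives of $a$ exactly as you do. For the remaining term it uses that multiplication by the Schwartz function $a$ becomes convolution on the Fourier side. Since $\hat a\in\mathcal S\subset L^1$ and $|\mathcal F(\partial_i\partial_jG)(\eta)| = |\eta_i\eta_j|/|\eta|^2\le 1$, one gets immediately
$$|\mathcal F(a\,\partial_i\partial_jG)(\xi)|\le\int|\hat a(\xi-\eta)|\,d\eta=\|\hat a\|_{L^1}.$$
You had already observed that $\mathcal F(\partial_i\partial_jG)$ is bounded, so you were one line away from this. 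None of the following is needed: the $\delta_0$-plus-p.v.\ decomposition, the mean-zero property of $\Omega$, the representation $a(x)=\int_1^2\mathbf 1_{|x|<r}\,d\mu(r)$, or the uniform bound for truncated kernels.

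What your route buys is robustness in the cutoff. It uses only that $a(x)=A(|x|)$ with $A$ of bounded variation; monotonicity is inessential, since the signed measure $-A'(r)\,dr$ works equally well. So it bounds, for example, $\mathcal F(\mathbf 1_{B_r}\,\partial_i\partial_jG)$ uniformly in $r$, where $\hat a\notin L^1$ and the convolution argument is unavailable. The paper's argument, in turn, needs no radial symmetry of $a$ and no cancellation beyond the boundedness of the multiplier.

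Two minor points for the write-up:

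\begin{itemize}
\item For $x\neq0$ one has $\partial_i\partial_jG=\frac{1}{2\pi}\frac{\delta_{ij}|x|^2-2x_ix_j}{|x|^4}$. The factor $(-1)$ in your formula is therefore a sign slip, harmless as you say.
\item Your sketch of the classical bound is slightly too loose. Integrating by parts in $r$ over $\{|x|>1/|\xi|\}$ bounds the radial integral only by a multiple of $|\theta\cdot\xi'|^{-1}$, with $\xi'=\xi/|\xi|$, and this is not integrable over $\theta\in S^1$. The standard proof instead uses $\bigl|\int_A^B e^{-i\lambda r}\,\frac{dr}{r}\bigr|\le C\bigl(1+\log^+\frac{1}{|\lambda|A}\bigr)$, whose logarithmic singularity in $\theta$ is integrable. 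Since you cite this result rather than reprove it, this is not a gap.
\end{itemize}
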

\begin{proof}
We use the product rule to write 
\begin{equation}
\label{nfpk product rule}
\partial_i\partial_j(aG)(x) = (\partial_i\partial_ja)G + \partial_ia\partial_jG + \partial_ja\partial_iG + a\partial_i\partial_jG. 
\end{equation}
Recall that $\norm{\hat{f}}{L^{\infty}} \leq \norm{f}{L^1}$, and note that the first three terms on the RHS of \eqref{nfpk product rule} are in $L^1(\R^2)$ since $G\in W^{1,1}_{loc}(\R^2)$. We cannot employ this same technique to handle $a\partial_i\partial_jG$. Instead, we observe that
\begin{equation}
|\mathcal{F}(a\partial_i\partial_jG)(\xi)| \leq C \int_{\R^2} |\hat{a}(\xi-\eta)|\frac{|\eta_i\eta_j|}{|\eta|^2}\,d\eta \leq C\norm{\hat{a}}{L^1}.
\end{equation}
\end{proof}
Next, we aim to establish a growth rate in $p$ of the velocity gradient. Recall that for smooth decaying velocity and vorticity, one has that $\norm{\nabla u}{L^p} \leq \frac{C(d)p^2}{p-1}\norm{\omega}{L^p}$, however, it is not apparent that there is an analogous result in uniformly local spaces. To establish this result, we show that the approximating solution constructed in \cite{taniuchi2004} satisfies the Serfati-type identity, and then pass to the limit. Following, we perform a careful analysis of the operator $Tf=\nabla(aK)\ast f$ in $L^p_{ul}(\R^2)$ where $K$ is the Biot-Savart kernel. It will be crucial to understand the operator norm of $T$ in $L^p_{ul}(\R^2)$ as this will determine the growth rate of $\theta$ that is allowed.
\begin{proposition}
Let $(u,\omega)$ be the weak solution to \eqref{E} as in Theorem \ref{Taniuchi Main theorem}. Then for any $t\geq 0$, the pair $(u,\omega)$ satisfies
\begin{equation}
\label{FLSI}
u(t)=u_0+(aK)\ast(\omega(t)-\omega_0) - \int_0^t \nabla\nabla^{\perp}((1-a)K)\ast\cdot (u\otimes u)(s)\,ds.
\end{equation}
\end{proposition}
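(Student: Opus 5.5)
The plan is to derive \eqref{FLSI} first for the smooth approximating solutions $(u_n,\omega_n)$ constructed by Taniuchi in \cite{taniuchi2004}, and then pass to the limit $n\to\infty$.

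For the approximations, take $u_{0,n}$ to be a mollification of $u_0$, possibly also truncated far away while keeping the divergence free condition. These are smooth and bounded with all derivatives bounded, and $\omega_{0,n}$ is bounded in $Y^\theta_{ul}$ uniformly in $n$. The corresponding solutions $u_n$ are smooth, bounded, and satisfy the Euler equation classically, with pressure $p_n=\sum R_kR_l u_n^ku_n^l$. For such $u_n$ we can write, in the sense of distributions,
$$u_n = (aK)\ast\omega_n + ((1-a)K)\ast\omega_n ,$$
where the second term is understood after differentiating in time. More precisely, fix $t$ and compute $\partial_t u_n$. The near-field part gives $(aK)\ast\partial_t\omega_n$. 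For the far field, use $\partial_t u_n = -\operatorname{div}(u_n\otimes u_n)-\nabla p_n$ and the identity $u=-\nabla^\perp(-\Delta)^{-1}\omega$ to write the far-field contribution as
$$(1-a)K\ast\partial_t\omega_n = (1-a)K\ast\operatorname{curl}\partial_t u_n .$$
Writing $\partial_t\omega_n=-\operatorname{div}(u_n\omega_n)=-\operatorname{curl}\operatorname{div}(u_n\otimes u_n)$ (valid since $\operatorname{div}u_n=0$), and moving all derivatives onto the kernel $(1-a)K$, yields
$$-\nabla\nabla^\perp((1-a)K)\ast\cdot(u_n\otimes u_n).$$
This term is absolutely convergent, because $\nabla^2((1-a)K)=O(|x|^{-3})$ is integrable and $u_n$ is bounded. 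The pressure disappears because $\operatorname{curl}\nabla p_n=0$. One has to justify that $\partial_t u_n$ equals the sum of these two pieces, not merely up to a harmonic (hence, being bounded, constant) function. To do this I would check that the difference is a bounded, divergence- and curl-free field whose constant part vanishes. One way is to test against $\partial_t u_n$ directly via the Biot–Savart law for the decaying derivatives, or to use Serfati's argument, in which $\partial_t u_n$ is bounded and lies in the appropriate Besov class so that Liouville applies. Integrating in time from $0$ to $t$ then gives \eqref{FLSI} for $u_n$.

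To pass to the limit, use the uniform-in-$n$ bounds from Theorem \ref{Taniuchi Main theorem}: $u_n$ is bounded in $L^\infty$ and $\omega_n$ is bounded in $Y^\theta_{ul}$ on $[0,T]$. Along a subsequence, $u_n\to u$ in $L^2_{loc}$ strongly (Aubin–Lions, as in Taniuchi's existence proof), and $\omega_n(t)\to\omega(t)$ weakly in $L^p_{loc}$. Then each term converges in $\mathcal D'$:
\begin{enumerate}
\item $(aK)\ast(\omega_n-\omega_{0,n})$ converges because $aK\in L^1$ is compactly supported, so local weak convergence suffices.
\item The nonlinear term converges by dominated convergence. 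Strong local convergence of $u_n\otimes u_n$ handles compact regions, and the uniform $L^\infty$ bound combined with the $|x|^{-3}$ decay of the kernel controls the tails.
\end{enumerate}
Since weak solutions here are weakly continuous in time, the identity holds for every $t$, not just almost every $t$.

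The main obstacle is the rigorous far-field step for the approximations: showing that no extra constant or harmonic term appears when bounded, non-decaying fields are represented through the truncated kernel. I would handle this by a Liouville argument on the difference, using that it is bounded and has time derivative zero initially, together with the explicit Fourier/Littlewood–Paley description of $\partial_t u_n$.
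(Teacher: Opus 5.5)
Your overall architecture matches the paper's. You prove the identity for Taniuchi's smooth approximations, then pass to the limit using the uniform $L^\infty$ bound, local convergence of $u_n$, local weak convergence of $\omega_n(t)$, and the $|x|^{-3}$ decay of $\nabla\nabla^{\perp}((1-a)K)$. Your limit passage is essentially fine: strong $L^2_{loc}$ convergence plus the uniform bound can replace the paper's local uniform convergence, because the far-field kernel is bounded and integrable.

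\textbf{The gap.} It lies in the step you yourself flag as the main obstacle. Write $A_n$ for your near-plus-far expression. After Liouville you only know $\partial_t u_n - A_n(t) = c_n(t)$ for some spatially constant vector $c_n(t)$, and nothing you invoke can force $c_n(t)=0$. Take $u(t,x)=U(t)$ and $p(t,x)=-U'(t)\cdot x$ with $U$ smooth and bounded. This is a smooth bounded solution of \eqref{E} with $\omega\equiv 0$. The difference $\partial_t u - A = U'(t)$ is smooth, bounded, divergence- and curl-free, and its time integral vanishes at $t=0$. Yet \eqref{FLSI} reduces to $U(t)=U(0)$, because $\nabla\nabla^{\perp}((1-a)K)$ has zero integral. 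The only thing that rules out this example is the normalization $p_n=\sum_{k,l}R_kR_l u_n^ku_n^l$. You state it, but then discard it exactly where it is needed (``the pressure disappears because $\operatorname{curl}\nabla p_n=0$'').

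\textbf{How the paper avoids it.} The paper starts from the normalized equation itself. The approximations satisfy $u^{\eps}(t)=u^{\eps}_0-\int_0^t\mathbb{P}\operatorname{div}(u^{\eps}\otimes u^{\eps})(s)\,ds$. Since the distributional kernel of $\mathbb{P}_{il}$ is $\partial^{\perp}_lK_i=\delta_{il}\delta_0-\partial_i\partial_lG$, one has $\mathbb{P}\operatorname{div}F=\nabla\nabla^{\perp}K\ast\cdot F$. Splitting $K=aK+(1-a)K$ gives a compactly supported near-field kernel and an $L^1$ far-field kernel, each acting on bounded $F$ with no constant ambiguity. The near-field piece is identified with $\partial_t\bigl((aK)\ast\omega^{\eps}\bigr)$ via $\partial_t\omega^{\eps}=-\operatorname{curl}\operatorname{div}(u^{\eps}\otimes u^{\eps})$, and integrating in time gives the identity directly.

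To repair your route you need exactly this input: show that $\partial_t u_n=-\mathbb{P}\operatorname{div}(u_n\otimes u_n)$ and that this distribution coincides with $A_n$. At that point the Liouville step becomes unnecessary.
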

\begin{proof}
We begin by recalling that the solution $(u,\omega)$ arises from an approximating sequence $(u^{\eps},\omega^{\eps})$, which is constructed in \cite{taniuchi2004}. They show that $(u^{\eps},\omega^{\eps})$ satisfy the uniform bounds
\begin{equation}
\label{uniform bounds approx}
\begin{split}
\sup_{0 \leq t <T} \norm{u^{\eps}(t)}{L^{\infty}} &\leq C(\norm{u_0}{L^{\infty}},\norm{\omega_0}{Y^{\theta}_{ul}},T),\\
\sup_{0 \leq t <T} \norm{\omega^{\eps}(t)}{L^p_{ul}} &\leq C(\norm{u_0}{L^{\infty}},\norm{\omega_0}{Y^{\theta}_{ul}},T,p),\\ 
\end{split}
\end{equation} and for each $0<\delta<1, 0<t<T$
\begin{equation}
\label{uniform bound cts}
\begin{split}
\norm{u^{\eps}(t)}{C^{1-\delta}}&\leq C(\norm{u_0}{L^{\infty}},\norm{\omega_0}{Y^{\theta}_{ul}},T)\\
\end{split}
\end{equation}
From \eqref{uniform bounds approx} and \eqref{uniform bound cts}, we see that there exists a subsequence $u^{\eps_n}$ such that \begin{equation}
\begin{split}
\label{u convergence}
&u^{\eps_n}\to u \quad \text{weak-$*$ in } L^{\infty}([0,T];L^{\infty}(\R^2))\\
&u^{\eps_n}\to u\quad \text{strongly in } L^{\infty}([0,T];L^{\infty}(B_R(0))) \text{ for each } R>0
\text{ and } T>0.
\end{split}
\end{equation}
Furthermore, $u^{\eps_n}$ satisfies
\begin{equation}
\label{approx sti eq1}
u^{\eps_n}(t)=u_0^{\eps_n} - \int_0^t \mathbb{P}\text{div }(u^{\eps_n}\otimes u^{\eps_n})(s)\,ds.
\end{equation}
In what follows, we aim to show that $u^{\eps_n}$ satisfies a Serfati-type identity, and then pass to the limit. We
let $G(x)=\frac{1}{2\pi}\log|x|$ so that $\Delta G=\delta_0$ and $K=\nabla^{\perp}G$, where $\delta_0$ is the Dirac-delta distribution. It follows that 
\begin{equation*}
\label{matrix kernel calculation}
\begin{split}
(\partial_l^{\perp}K_i)_{i,l=1}^2 &= \begin{pmatrix} \partial_2^2G & -\partial_1\partial_2G \\
-\partial_1\partial_2G & \partial_1^2G \end{pmatrix}\\
&=
\begin{pmatrix} \partial_1^2G+ \partial_2^2G & 0\\
0 & \partial_1^2G +\partial_2^2G\end{pmatrix}-
\begin{pmatrix} \partial_1^2G & \partial_1\partial_2G \\
\partial_1\partial_2G & \partial_2^2G \end{pmatrix}.\\
\end{split}
\end{equation*}
Hence, if $\delta_{il}$ is the Kronecker-delta function, we see that $\partial_l^{\perp}K_i = \delta_{il}\delta_0 - \partial_i\partial_lG $, which is the distributional kernel of $\mathbb{P}_{il}$. Substituting this into \eqref{approx sti eq1} gives
\begin{equation}
\label{approx sti eq2}
\begin{split}
u^{\eps_n}(t)&=u_0^{\eps_n} - \int_0^t \mathbb{P}\text{div }(u^{\eps_n}\otimes u^{\eps_n})(s)\,ds \\
&= u_0^{\eps_n} - \int_0^t \nabla\nabla^{\perp}K\ast\cdot(u^{\eps_n}\otimes u^{\eps_n})(s)\,ds\\
&=u_0^{\eps_n} - \int_0^t \nabla\nabla^{\perp}(aK)\ast\cdot(u^{\eps_n}\otimes u^{\eps_n})(s)\,ds - \int_0^t \nabla\nabla^{\perp}((1-a)K)\ast\cdot(u^{\eps_n}\otimes u^{\eps_n})(s)\,ds.
\end{split}
\end{equation}
Since $u^{\eps_n}$ and $\omega^{\eps_n}$ are smooth, we see that 
$$
\partial_t((aK)\ast\omega^{\eps_n})=(aK)\ast \partial_t\omega^{\eps_n}=-(aK)\ast \text{curl div}(u^{\eps_n}\otimes u^{\eps_n}) = - \nabla\nabla^{\perp}(aK)\ast \cdot(u^{\eps_n}\otimes u^{\eps_n}). 
$$
Then, integrating in time one finds
\begin{equation}
\label{approx sti eq3}
 - \int_0^t \nabla\nabla^{\perp}(aK)\ast\cdot(u^{\eps_n}\otimes u^{\eps_n})(s)\,ds = (aK)\ast (\omega^{\eps_n}(t)-\omega^{\eps_n}_0).
 \end{equation}
 Combining \eqref{approx sti eq2} and \eqref{approx sti eq3} yields
 \begin{equation}
 \label{approx sti eq}
 u^{\eps_n}(t)=u^{\eps_n}_0 + (aK) \ast (\omega^{\eps_n}(t) - \omega_0^{\eps_n}) - \int_0^t \nabla \nabla^{\perp}((1-a)K) \ast \cdot (u^{\eps_n}\otimes u^{\eps_n})(s)\,ds.
 \end{equation}
We aim to pass to the limit in \eqref{approx sti eq}. To do this, we first show a form of local weak convergence for the vorticity. From \eqref{u convergence}, we see that for any $t \in [0,T]$ and $\psi \in C_c^{\infty}(\R^2)$
\begin{equation}
\label{vorticity distributional convergence}
\lim_{n \to \infty} \left |\int_{\R^2} (\omega^{\eps_n}(t,x)-\omega(t,x))\psi(x)\,dx\right| \leq \lim_{n \to \infty} \norm{(u^{\eps_n}(t)-u(t))}{L^{\infty}(\text{supp }(\psi))}\norm{\nabla \psi}{L^1} = 0,
\end{equation}
showing $\omega^{\eps_n}(t) \to \omega(t)$ in $\mathcal{D}'(\R^2)$. Utilizing distributional convergence of the vorticity, we will show that for each $2\leq p<\infty, z \in \R^2$, $\lambda>0$, and $\psi \in L^{p'}(\R^2)$
\begin{equation}
\label{compact weak star convergence}
\lim_{n \to \infty} \int_{\R^2} \phi_{z,\lambda}(x)\omega^{\eps_n}(t,x) \psi(x)dx = \int_{\R^2} \phi_{z,\lambda}(x)\omega(t,x) \psi(x)\,dx.
\end{equation}
 Let $2\leq p <\infty$ , we know that 
\begin{equation}
\label{rhs limit 1}
\sup_{n\in \mathbb{N}} \norm{\omega^{\eps_n}(t)}{L^p(\text{supp }{\phi_{z,\lambda}})} \leq C(\norm{u_0}{L^{\infty}},\norm{\omega_0}{Y^{\theta}_{ul}},T,p).
\end{equation}
Since $\omega^{\eps_n}(t) \to \omega(t)$ in $\mathcal{D}'(\R^2)$, by uniqueness of weak limits and reflexivity, a subsequence (which we relabel) of $\omega^{\eps_n}(t)$ converges weakly to $\omega(t)$ in $L^p(\text{supp }{\phi_{z,\lambda}})$ and by weak lower semi-continuity one has
\begin{equation}
\label{rhs limit 2}
 \norm{\omega(t)}{L^p(\text{supp }{\phi_{z,\lambda}})} \leq C(\norm{u_0}{L^{\infty}},\norm{\omega_0}{Y^{\theta}_{ul}},T,p).
\end{equation}
Let $\psi \in L^{p'}(\R^2)$, by density, we construct a sequence $\{\psi_m\}_{m=1}^{\infty}\subseteq C_c^{\infty}(\R^2)$ such that $\psi_m \to\psi$ in $L^{p'}(\R^2)$. Then  
\begin{equation}
\label{rhs limit 2}
\begin{split}
&\left|\int_{\R^2} \phi_{z,\lambda}(x)(\omega^{\eps_n}(t,x)-\omega(t,x))\psi(x)\,dx\right|  \leq \left|\int_{\R^2} \phi_{z,\lambda}(x)(\omega^{\eps_n}(t,x)-\omega(t,x))\psi_m(x)\,dx\right|\\
&\hspace{3cm}+\left|\int_{\R^2} \phi_{z,\lambda}(x)(\omega^{\eps_n}(t,x)-\omega(t,x))(\psi_m(x)-\psi(x))\,dx\right|=I+II.
\end{split}
\end{equation}
In order to estimate $I$, note that $\phi_{z,\lambda}\psi_m \in C^{\infty}_c(\R^2)$ which implies that 
\begin{equation}
\label{rhs limit 4}
\lim_{n \to \infty} \left|\int_{\R^2} \phi_{z,\lambda}(x)(\omega^{\eps_n}(t,x)-\omega(t,x))\psi_m(x)\,dx\right|=0,
\end{equation}
as $\omega^{\eps_n}(t) \to \omega(t)$ in $\mathcal{D}'(\R^2)$. To estimate $II$, from Hölder's inequality
\begin{equation}
\label{rhs limit 3}
II \leq \sup_{n\in \mathbb{N}}\norm{\phi_{z,\lambda}(\omega^{\eps_n}(t)-\omega(t))}{L^p}\norm{\psi_m-\psi}{L^{p'}} \to 0 \text{ as } m \to \infty,
\end{equation}
where we made use of \eqref{rhs limit 1} and the fact that $\psi_m \to \psi$ in $L^{p'}(\R^2)$, which establishes \eqref{compact weak star convergence}. We now pass to the limit in \eqref{approx sti eq}. Let $x \in \R^2$ be fixed and $2<p<\infty$. Since $\text{supp}((aK)(x-\cdot)) \subseteq B_2(x)$ we note that $aK(x-y)=aK(x-y)\phi_{x,2}(y)$ and hence
\begin{equation}
\label{rhsp1}
\lim_{n \to \infty}\left|(aK) \ast (\omega^{\eps_n}(t)-\omega(t))(x)\right| = \lim_{n \to \infty}\left|\int_{\R^2} (aK)(x-y) \phi_{x,2}(y)(\omega^{\eps_n}(t,y)-\omega(t,y))\,dy \right|=0,
\end{equation}
where we made use of $y \mapsto aK(x-y) \in L^{p'}(\R^2)$. Since $\omega_0^{\eps_n} = \rho_{\eps_n} \ast \omega_0,$ where $\rho_{\eps_n}$ is a standard mollifier, by a similar argument we easily see that 
\begin{equation}
\label{rhsp2}
\lim_{n \to \infty} \left|(aK) \ast (\omega^{\eps_n}_0-\omega_0)(x)\right| \leq \norm{aK}{L^{p'}} \lim_{n \to \infty} \norm{\phi_{x,2}(y)(\omega_0^{\eps_n}(y)-\omega_0(y))}{L^p_y}=0.
\end{equation}
We now estimate the difference coming from the integrand on the right-hand side of \eqref{approx sti eq}. We let $R>0$ to be chosen later. Using \eqref{uniform bounds approx}, one obtains
\begin{equation}
\label{rhsp3}
\begin{split}
&\sup_{0 \leq s \leq T}\left| \nabla\nabla^{\perp}((1-a)K)\ast \cdot(u^{\eps_n}\otimes u^{\eps_n}-u\otimes u)(s,x)\right|\\ &\leq C(\norm{u_0}{L^{\infty}},\norm{\omega_0}{Y^{\theta}_{ul}},T)\int_{|y| \leq R} |\nabla\nabla^{\perp}((1-a)K)(x-y)|\sup_{0 \leq s \leq T}|u^{\eps_n}(s,y)-u(s,y)|\,dy\\
&\hspace{.5cm} +C(\norm{u_0}{L^{\infty}},\norm{\omega_0}{Y^{\theta}_{ul}},T)\int_{|y| > R} |\nabla\nabla^{\perp}((1-a)K)(x-y)|dy\\
&\leq C(\norm{u_0}{L^{\infty}},\norm{\omega_0}{Y^{\theta}_{ul}},T)\left(\norm{\nabla\nabla^{\perp}((1-a)K)}{L^1}\sup_{0 \leq s \leq T} \norm{(u^{\eps_n}-u)(s)}{L^{\infty}(B_R(0))} + \int_{|y| \geq R} \frac{dy}{|x-y|^3}\right)\\
&\leq C(\norm{u_0}{L^{\infty}},\norm{\omega_0}{Y^{\theta}_{ul}},T) \left(\sup_{0 \leq s \leq T}\norm{(u^{\eps_n}-u)(s)}{L^{\infty}(B_R(0))} + \frac{1}{R-|x|}\right).
\end{split}
\end{equation}
Let $\eta>0$, we then choose $R>\max\left\{ 2|x|+4, \frac{4 C(\norm{u_0}{L^{\infty}},\norm{\omega_0}{Y^{\theta}_{ul}},T)}{\eta}\right)$ and use \eqref{u convergence} to choose $N \in \mathbb{N}$ so that $\sup_{0 \leq s \leq T}\norm{(u^{\eps_n}-u)(s)}{L^{\infty}(B_R(0))} \leq \frac{\eta}{2 C(\norm{u_0}{L^{\infty}},\norm{\omega_0}{Y^{\theta}_{ul}},T)}$. It follows that for any $n \geq N$
$$
\sup_{0 \leq s \leq T}\left| \nabla\nabla^{\perp}((1-a)K)\ast\cdot (u^{\eps_n}\otimes u^{\eps_n}-u\otimes u)(s,x)\right| <\eta,
$$
so that 
\begin{equation}
\label{rhsp4}
\lim_{n \to \infty }\left| \int_0^t\nabla\nabla^{\perp}((1-a)K)\ast\cdot (u^{\eps_n}\otimes u^{\eps_n}-u\otimes u)(s,x)\,ds\right|=0.
\end{equation}
Combining \eqref{rhsp1},\eqref{rhsp2}, \eqref{rhsp4}, and passing to the limit for the remaining terms in \eqref{approx sti eq} using \eqref{u convergence} and properties of mollification yields
\begin{equation}
\label{weak sfi}
u(t)=u_0 + (aK) \ast (\omega(t)-\omega_0) - \int_0^t \nabla \nabla^{\perp}((1-a)K) \ast \cdot(u \otimes u)(s)\,ds.
\end{equation}

\end{proof}
When estimating the Serfati-type identity, we need to compute the distributional gradient of $aK$, where $K$ is the Biot-Savart kernel. We closely follow \cite{majda2002vorticity} and utilize the following theorem from \cite{grafakos2008classical}.
\begin{theorem}\label{nice operator}[Theorem 5.3.3, \cite{grafakos2008classical}] Let $\mathcal{K} \in L^1_{loc}(\R^d\setminus\{0\})$ satisfy
\begin{equation}
\label{kernel assumption 1}
\sup_{R > 0} \int_{R \leq |x|\leq 2R} |\mathcal{K}(x)|\,dx=A_1,
\end{equation}
and
\begin{equation}
\label{kernel assupmtion 2}
\sup_{y \neq 0} \int_{|x| \geq 2|y|} |\mathcal{K}(x-y)-\mathcal{K}(x)|\,dx=A_2.
\end{equation}
Let $W \in \mathcal{S}'(\R^d)$ be defined as 
$$
\langle W,\varphi\rangle = \lim_{\eps \to 0^+}\int_{|x| \geq \eps}\mathcal{K}(x)\varphi(x)\,dx.
$$
Let $T$ be the operator associated to convolution with $W$, and assume that $T$ has a bounded extension mapping $L^r(\R^d)$ to itself for some $1<r<\infty$ with norm $B$. Then $T$ has a bounded extension operator from $L^p(\R^d)$ to itself for $1<p<\infty$ with norm
\begin{equation}
\label{operator norm}
\norm{T}{L^p \to L^p} \leq C(d)\max(p,(p-1)^{-1})(A_2+B),
\end{equation}
where $C(d)$ is a constant which does not depend on $p$.
\end{theorem}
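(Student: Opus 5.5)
We follow the classical Calder\'on--Zygmund scheme, as in \cite{grafakos2008classical}, keeping track of the constants. The plan has three steps, in this order: a weak type $(1,1)$ bound with constant linear in $A_2+B$, interpolation down to $1<p\le 2$, and duality for $p\ge 2$. The factor $\max(p,(p-1)^{-1})$ should come out of the Marcinkiewicz constant near $p=1$ and, by duality, near $p=\infty$.

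\emph{Step 1: weak type $(1,1)$.} We aim for
$$
\bigl|\{|Tf|>\alpha\}\bigr| \le C(d)(A_2+B)\,\alpha^{-1}\norm{f}{L^1}, \qquad f\in L^1\cap L^r(\R^d).
$$
Fix $\alpha>0$. The Calder\'on--Zygmund decomposition at height $\gamma\alpha$, with $\gamma=(A_2+B)^{-1}$ as a normalization, gives $f=g+\sum_j b_j$. Here $|g|\le 2^d\gamma\alpha$ and $\norm{g}{L^1}\le\norm{f}{L^1}$. Each $b_j$ is supported on a dyadic cube $Q_j$, has mean zero, and $\sum_j\norm{b_j}{L^1}\le 2\norm{f}{L^1}$. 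Finally $\sum_j|Q_j|\le(\gamma\alpha)^{-1}\norm{f}{L^1}$.
\begin{itemize}
\item \emph{Good part.} By Chebyshev and the $L^r$ bound $B$,
$$
|\{|Tg|>\alpha/2\}|\le (2B/\alpha)^r\norm{g}{L^r}^r\le C\,B^r\alpha^{-r}(\gamma\alpha)^{r-1}\norm{f}{L^1}.
$$
This is $\lesssim (A_2+B)\alpha^{-1}\norm{f}{L^1}$ by the choice of $\gamma$.
\item \emph{Bad part, near the cubes.} Remove $\bigcup_j Q_j^*$, where $Q_j^*$ is the dilate of $Q_j$ by a factor $2\sqrt d$. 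Its measure is $\lesssim (\gamma\alpha)^{-1}\norm{f}{L^1}$.
\item \emph{Bad part, away from the cubes.} Off $Q_j^*$, the mean zero of $b_j$ and hypothesis \eqref{kernel assupmtion 2} give
$$
\int_{(Q_j^*)^c}|Tb_j|\le\int_{Q_j}|b_j(y)|\int_{|x-c_j|\ge 2|y-c_j|}|\mathcal K(x-y)-\mathcal K(x-c_j)|\,dx\,dy\le A_2\norm{b_j}{L^1}.
$$
Chebyshev then bounds this piece by $\lesssim A_2\alpha^{-1}\norm{f}{L^1}$.
\end{itemize}
One technical point is that $Tb_j$ must be given by the kernel off the support of $b_j$. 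We handle this by working with $f\in L^1\cap L^r$, so that $b_j\in L^r$ and $T b_j$ makes sense. Away from the support, the principal value is then just an absolutely convergent integral, using \eqref{kernel assumption 1} for local integrability.

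\emph{Step 2: an $L^2$ bound, then $1<p\le 2$.} First we secure a bound $\norm{T}{L^2\to L^2}\le C(d)(A_2+B)$. If $r>2$, this follows from Marcinkiewicz between weak $(1,1)$ and $L^r$, at the fixed exponent $2$. If $r<2$, we first pass to the adjoint. The adjoint is convolution with $\widetilde{\mathcal K}(x)=\mathcal K(-x)$, which satisfies \eqref{kernel assumption 1} and \eqref{kernel assupmtion 2} with the same $A_1,A_2$ and is bounded on $L^{r'}$ with norm $B$, where $r'>2$. The previous case then gives the adjoint on $L^2$, and hence $T$ itself. Next we interpolate between weak $(1,1)$ and strong $(4,4)$, the latter obtained the same way. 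For $1<p\le2$, the Marcinkiewicz constant is
$$
\lesssim\left(\frac{p}{p-1}+\frac{p\,4}{4-p}\right)^{1/p},
$$
which stays bounded away from $p=4$ and is $\lesssim(p-1)^{-1}$. This yields $\norm{T}{L^p\to L^p}\le C(d)(p-1)^{-1}(A_2+B)$.

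\emph{Step 3: duality for $p\ge2$.} We apply Step 2 to the adjoint at the exponent $p'\in(1,2]$. Since $(p'-1)^{-1}=p-1\le p$, this gives the bound $C(d)\,p\,(A_2+B)$, and combining with Step 2 proves \eqref{operator norm}.

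The main obstacle is bookkeeping rather than ideas. One issue is making Step 1 linear in $A_2+B$ rather than in some power of $B$, which is exactly why we normalize the decomposition height by $\gamma=(A_2+B)^{-1}$. The other is avoiding the blow-up of the Marcinkiewicz constant at the upper endpoint. That is why we interpolate against a fixed exponent such as $4$, rather than against $r$, which could be arbitrarily close to $1$.
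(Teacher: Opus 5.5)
The paper does not prove this statement; it is quoted from Grafakos. Your outline (Calder\'on--Zygmund decomposition for weak type $(1,1)$, Marcinkiewicz on $1<p\le 2$ against a fixed exponent, duality for $p\ge 2$) is the standard argument behind that citation, so the strategy and the factor $\max(p,(p-1)^{-1})$ are right. What does not hold as written is your claim that the constants depend only on $d$, which is exactly the bookkeeping you single out.

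\begin{itemize}
\item In Step 1, with $\gamma=(A_2+B)^{-1}$ the good part is bounded by $2^r2^{d(r-1)}B^r(A_2+B)^{1-r}\alpha^{-1}\|f\|_{L^1}$. The hidden factor $2^r2^{d(r-1)}$ grows with $r$.
\item In Step 2, the Marcinkiewicz constant between weak $(1,1)$ and strong $(r,r)$ at exponent $2$ is $2\bigl(2+\frac{2}{r-2}\bigr)^{1/2}$. This blows up as $r\downarrow 2$, and, via the adjoint, also as $r\uparrow 2$.
\end{itemize}
So you have actually proved the estimate with a constant $C(d,r)$. That still gives the $p$-dependence the paper uses, but not the $d$-only constant the statement asserts.

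Both points are easy to repair.
\begin{itemize}
\item Take $\gamma=(2^{d+1}B)^{-1}$. The good part is then at most $2B\alpha^{-1}\|f\|_{L^1}$. The two bad pieces are at most $\bigl((2\sqrt d)^d2^{d+1}B+4A_2\bigr)\alpha^{-1}\|f\|_{L^1}$.
\item For $L^2$, do not interpolate against $r$. The adjoint of convolution with $W$ is convolution with the reflection of $W$, and $f\mapsto f(-\cdot)$ is an isometry. Hence $\|T\|_{L^{r'}\to L^{r'}}=\|T\|_{L^r\to L^r}=B$, and Riesz--Thorin between $r$ and $r'$ gives $\|T\|_{L^2\to L^2}\le B$.
\item Marcinkiewicz for the adjoint between $1$ and $2$ at $4/3$ then yields your $L^4$ bound with a dimensional constant. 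Your Steps 2 and 3 go through unchanged.
\end{itemize}
In the paper's application $r=2$, so nothing downstream is affected.

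One small precision in Step 1. Since $\mathcal K$ is only in $L^1_{loc}(\R^d\setminus\{0\})$ and $b_j$ is only in $L^r$, the identity $Tb_j(x)=\int \mathcal K(x-y)b_j(y)\,dy$ off $Q_j^*$ holds only almost everywhere. It should be obtained by approximating $b_j$ in $L^r$ by test functions supported in a slightly enlarged cube, not from pointwise absolute convergence.
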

\begin{proposition}
\label{Local CZO}
Let $p \in (1,\infty)$, $\mathcal{K} \in C^2(\R^d\setminus\{0\})$ be homogeneous of degree $1-d$, and $f \in L^p_{ul}(\R^d)$. Then 
\begin{equation}
\label{local czo representation}
T_jf=\partial_{x_j} (a\mathcal{K} \ast f) = \text{p.v.}\int_{\R^d}\partial_{x_j} (a\mathcal{K})(x-y)f(y)\,dy + C_0(j,d,\mathcal{K})f,
\end{equation}
where
$$
C_0(j,d,K)=\int_{S^{d-1}}\mathcal{K}(\sigma)\sigma_jdS(\sigma),
$$
and the principal value represents a Calder\'{o}n-Zygmund operator. Furthermore, $T_j$ is a continuous linear operator on $L^p_{ul}(\R^d)$ and 
$$
\norm{T_jf}{L^p_{ul}} \leq \frac{C(j,d,\mathcal{K})p^2}{p-1}\norm{f}{L^p_{ul}}
$$
\end{proposition}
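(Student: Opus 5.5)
The plan has two parts. First we derive the representation formula \eqref{local czo representation} by a direct distributional computation. Then we prove the $L^p_{ul}$ bound by localizing to an $L^p$ estimate for a compactly supported piece of $f$ and applying Theorem \ref{nice operator}.

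\emph{Step 1: the representation formula.} Since $a\mathcal{K}$ is homogeneous of degree $1-d$ near the origin, it is locally integrable, and $a\mathcal{K}\ast f$ makes sense for $f\in L^p_{ul}$ because $a\mathcal{K}$ has compact support. Fix a test function $\varphi$ and write
\[
\langle \partial_j(a\mathcal{K}),\varphi\rangle=-\lim_{\eps\to0}\int_{|x|\ge\eps}a\mathcal{K}\,\partial_j\varphi\,dx .
\]
Integrating by parts on $\{|x|\ge\eps\}$ produces two terms. The interior term is $\int_{|x|\ge\eps}\partial_j(a\mathcal{K})\varphi$. The boundary term is an integral over $|x|=\eps$ with inward normal $-x/|x|$. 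Since $a\equiv1$ near $0$, homogeneity turns the boundary term into
\[
\varphi(0)\int_{S^{d-1}}\mathcal{K}(\sigma)\sigma_j\,dS(\sigma)+o(1),
\]
exactly as in the computation of $\nabla K$ in \cite{majda2002vorticity}. This gives $\partial_j(a\mathcal{K})=\text{p.v.}\,\partial_j(a\mathcal{K})+C_0\delta_0$, and convolving with $f$ (first for smooth $f$, then by density and locality) yields \eqref{local czo representation}.

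\emph{Step 2: the kernel is a Calder\'on--Zygmund kernel.} Set $\mathcal{L}=\partial_j(a\mathcal{K})$. Near the origin $\mathcal{L}=\partial_j\mathcal{K}$, which is homogeneous of degree $-d$ with mean zero on $S^{d-1}$. The mean-zero property follows from the divergence theorem applied to $\mathcal{K}e_j$ on an annulus, using the homogeneity of $\mathcal{K}$. Away from the origin, $\mathcal{L}$ is smooth and supported in $B_2$. Hence $|\mathcal{L}(x)|\le C|x|^{-d}$ and $|\nabla\mathcal{L}(x)|\le C|x|^{-d-1}$, which gives \eqref{kernel assumption 1} and \eqref{kernel assupmtion 2} with constants $A_1,A_2$ depending only on $d,\mathcal{K}$.

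For the $L^2$ bound $B$, split
\[
\mathcal{L}=\text{p.v.}\,\partial_j\mathcal{K}-\partial_j((1-a)\mathcal{K}).
\]
The first piece has a bounded Fourier multiplier, being homogeneous of degree $0$ and smooth on the sphere. The second piece is not integrable at infinity, but its Fourier transform is bounded. This follows as in Lemma \ref{near field pressure kernel}: write $(1-a)\partial_j\mathcal{K}=\partial_j\mathcal{K}-a\partial_j\mathcal{K}$, note that $\partial_ja\,\mathcal{K}\in L^1$, and bound the $a$-localized term by $\|\hat a\|_{L^1}$ times a bounded multiplier. Theorem \ref{nice operator} then gives
\[
\|\text{p.v.}\,\mathcal{L}\ast g\|_{L^p}\le C\max(p,(p-1)^{-1})\|g\|_{L^p}\le \frac{Cp^2}{p-1}\|g\|_{L^p},
\]
using $\max(p,1/(p-1))\le p^2/(p-1)$.

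\emph{Step 3: localization, the main obstacle.} This is the step I expect to be hardest, because a singular integral is not directly bounded on $L^p_{ul}$. Here the compact support of $\mathcal{L}$ saves us. Fix $z$ and use the equivalent norm with $\lambda=1$. For $x\in\supp\phi_z\subset B_2(z)$, the value $(\text{p.v.}\,\mathcal{L}\ast f)(x)$ only involves $f$ on $B_4(z)$. So we may replace $f$ by $f\mathbf 1_{B_4(z)}$, or by $\phi_{z,4}f$, which agrees with $f$ there since $\phi_{z,4}\equiv1$ on $B_4(z)$. This gives
\[
\|\phi_z T_jf\|_{L^p}\le \|\text{p.v.}\,\mathcal{L}\ast(\phi_{z,4}f)\|_{L^p}+|C_0|\|\phi_zf\|_{L^p}\le \frac{Cp^2}{p-1}\|\phi_{z,4}f\|_{L^p}+C\|f\|_{L^p_{ul}}.
\]
Finally, $\|\phi_{z,4}f\|_{L^p}\le C\|f\|_{L^p_{ul}}$ by the norm equivalence remark. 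One must check that the equivalence constant is $p$-independent. It is: covering $B_8(z)$ by a fixed number $N(d)$ of unit balls gives a constant of at most $N(d)$. Taking the supremum over $z$ gives the claim, and linearity gives continuity of $T_j$.
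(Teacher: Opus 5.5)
Your proof is correct, and its skeleton matches the paper's. Both use the distributional computation producing $C_0\delta_0$, the localization $\phi_z T_j f=\phi_z T_j(\phi_{z,4}f)$ (the paper uses $\phi_{z,8}$), and an application of Theorem \ref{nice operator} with the size and H\"ormander conditions.

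The genuine difference is how you obtain the $L^2$ bound $B$.

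\textbf{The paper's route.} It never separates off $\mathrm{p.v.}\,\partial_j\mathcal{K}$. Instead it bounds the multiplier of the full distribution $\partial_j(a\mathcal{K})$, delta term included, directly. It splits $a\mathcal{K}=\chi_R a\mathcal{K}+(1-\chi_R)a\mathcal{K}$ and bounds the first piece by $\|\chi_R a\mathcal{K}\|_{L^1}\le CR$. On the far piece it integrates by parts twice in the direction $\xi/|\xi|$. Choosing $R=1/|\xi|$ then gives $|\mathcal{F}(a\mathcal{K})(\xi)|\le C/|\xi|$ for $|\xi|>1$, with the $L^1$ norm handling $|\xi|\le 1$. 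This uses only the pointwise bounds on $\mathcal{K}$, $\nabla\mathcal{K}$, $\nabla^2\mathcal{K}$ and needs no cancellation.

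\textbf{Your route.} You use the classical fact that $\mathrm{p.v.}\,\Omega(x)/|x|^d$, with mean-zero $\Omega\in L^q(S^{d-1})$ and $q>1$, has a bounded Fourier transform. You then multiply by the Schwartz cutoff $a$, as in Lemma \ref{near field pressure kernel}. This is shorter, but it has two requirements:
\begin{itemize}
\item the mean-zero property of $\partial_j\mathcal{K}$ on $S^{d-1}$, which you correctly verify with the divergence theorem on an annulus;
\item the explicit multiplier formula for such kernels. Homogeneity of degree $0$ alone does not give boundedness, so cite that theorem (e.g.\ from \cite{grafakos2008classical}) rather than appealing to ``smooth on the sphere''; with $\mathcal{K}\in C^2$ the multiplier is bounded but need not be smooth.
\end{itemize}

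Your detour through $\partial_j((1-a)\mathcal{K})$ collapses back to $\mathcal{L}=(\partial_j a)\mathcal{K}+a\,\mathrm{p.v.}\,\partial_j\mathcal{K}$, so you can write that product-rule decomposition directly.

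Finally, you show that the constant in $\|\phi_{z,4}f\|_{L^p}\le C\|f\|_{L^p_{ul}}$ is independent of $p$ by covering with $N(d)$ unit balls. The paper uses this fact silently when it takes the supremum over $z$.
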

\begin{proof}
We first compute the distributional derivative of $a\mathcal{K}$. Let $\varphi \in C_0^{\infty}(\R^d)$, then
\begin{equation*}
\begin{split}
(\partial_{x_j}(a\mathcal{K}),\varphi) &= -\lim_{\eps \to 0^+} \int_{|x| \geq \eps} a(x)\mathcal{K}(x)\partial_{x_j}\varphi(x)\,dx \\
=& \lim_{\eps \to 0^+} \left\{\int_{|x|\geq \eps} \partial_{x_j}(a(x)\mathcal{K}(x)) \varphi(x)\,dx + \int_{|x|=\eps} a(x)\mathcal{K}(x)\varphi(x)\frac{x_j}{|x|}dS\right\}.
\end{split}
\end{equation*}
In the latter integral, we perform the substitution $x=\eps y$ to see that 
\begin{equation*}
\begin{split}
\lim_{\eps \to 0^+}\int_{|x|=\eps} a(x)\mathcal{K}(x)\varphi(x)\frac{x_j}{|x|}dS &= \lim_{\eps \to 0^+}\int _{|y|=1}a(\eps y) \eps^{1-d}\mathcal{K}(y) \varphi(\eps y) \frac{y_j}{|y|}\eps^{d-1} \,dy\\
&= a(0)\varphi(0) \int_{|y|=1} \mathcal{K}(y)y_j\,dy\\
&=C_0(j,d,\mathcal{K})\varphi(0),
\end{split}
\end{equation*}
where in the second equality we used the Dominated Convergence Theorem, and in the last equality we used that $a(0)=1$, which establishes \eqref{local czo representation}. We make the observation that 
$$
\phi_z(\nabla(a\mathcal{K})\ast f) = \phi_{z}(\nabla(a\mathcal{K})\ast \phi_{z,8}f).
$$
Hence, showing that $T_j$ is a bounded linear operator on $L^p_{ul}(\R^d)$ will follow from showing that $T$ is a bounded linear operator on $L^p(\R^d)$. We first show that $T_j:L^2(\R^d) \to L^2(\R^d)$ is bounded. In particular, we aim to show that $\mathcal{F}(\partial_{x_j}(a\mathcal{K}))(\xi) \in L^{\infty}_{\xi}(\R^d)$ as
$$
\norm{T_jf}{L^2} \leq \norm{\mathcal{F}(\partial_{x_j} (a\mathcal{K}))(\xi)}{L^{\infty}_{\xi}}\norm{f}{L^2}.
$$
To see that $\mathcal{F}(\partial_{x_j}(a\mathcal{K}))(\xi) \in L^{\infty}_{\xi}(\R^d)$, we note that since $\mathcal{K} \in C^2(\R^d\setminus \{0\})$ is homogeneous of degree $1-d$, one has the usual bounds 
\begin{equation}
\label{k decay}
|\mathcal{K}(x)| \leq C(\mathcal{K})|x|^{1-d}, \quad |\nabla \mathcal{K}(x)| \leq C(\mathcal{K})|x|^{-d}, \quad |\nabla^2K(x)|\leq C(K)|\mathcal{K}|^{-d-1}.
\end{equation}
We consider two cases, one when $|\xi| \leq 1$ and the other when $|\xi| >1$. For $|\xi|>1$, we let $\chi$ be a standard bump function, and define $\chi_R(x)=\chi(x/R)$ for $R>0$. We have that 
\begin{equation}
\label{near field ak}
|\widehat{\chi_Ra\mathcal{K}}(\xi)| \leq \norm{\chi_Ra\mathcal{K}}{L^1}\leq C\int_{|x|\leq 2R} |x|^{1-d}\,dx \leq CR.
\end{equation} 
To estimate the far-field, for a fixed $\xi \in \R^d$ we define $e_{\xi}=\frac{\xi}{|\xi|}$ and $\partial_{e_{\xi}}=e_{\xi}\cdot \nabla$. Integration by parts twice yields the estimate
\begin{equation}
\label{far field ak 1}
\begin{split}
\left| \mathcal{F}\left((1-\chi_R)a\mathcal{K}\right)(\xi)\right| &= \left| \int_{\R^d} (1-\chi_R)(x)(a\mathcal{K})(x) e^{-2\pi i x \cdot \xi} \,dx\right|\\
&=
\frac{1}{4\pi^2|\xi|^2}\left| \int_{\R^d} (1-\chi_R)(x)(a\mathcal{K})(x) \partial_{e_{\xi}}^2e^{-2\pi i x \cdot \xi} \,dx\right|\\
&\leq \frac{1}{4\pi^2 |\xi|^2}\norm{\partial_{e_{\xi}}^2((1-\chi_R)(a\mathcal{K}))}{L^1}.
\end{split}
 \end{equation}
Since $a \in \mathcal{S}(\R^d)$, from the product rule we have
 \begin{equation}
 \begin{split}
 \label{far field ak 2}
 \norm{\partial_{e_{\xi}}^2((1-\chi_R)(a\mathcal{K}))}{L^1} & \leq \norm{(1-\chi_R)\partial_{e_{\xi}}^2(a\mathcal{K})}{L^1} + \norm{2(\partial_{e_\xi}\chi_R)(\partial_{e_{\xi}}(a\mathcal{K}))}{L^1} + \norm{ (\partial_{e_{\xi}}^2\chi_R)a\mathcal{K}}{L^1}\\
 &\leq C\int_{R}^{\infty} \rho^{-2}\,d\rho + \frac{C}{R}\int_{R}^{2R} \rho^{-1}\,d\rho + \frac{C}{R^2} \int_{R}^{2R}d\rho\leq \frac{C}{R}.
 \end{split}
 \end{equation}
 Combining \eqref{near field ak}, \eqref{far field ak 1}, \eqref{far field ak 2}, and setting $R=\frac{1}{|\xi|}$ we find that 
 \begin{equation}
\label{xi less than 1 kernel}
|\mathcal{F}((a\mathcal{K}))(\xi)| \leq C(R+\frac{1}{|\xi|^2R})\leq \frac{C}{|\xi|}.
 \end{equation}
 It immediately follows that $\mathcal{F}(\partial_j(a\mathcal{K}))(\xi) \in L^{\infty}_{\xi}(\R^d)$ for $|\xi|>1$. To see the case when $|\xi|\leq 1$, we simply observe that $|\xi||\widehat{a\mathcal{K}}(\xi)| \leq \norm{a\mathcal{K}}{L^1}$ which gives the desired result.
 It remains to show that $\nabla(a\mathcal{K})$ satisfy \eqref{kernel assumption 1} and \eqref{kernel assupmtion 2}. We have that 
\begin{equation}
\begin{split}
\sup_{R\geq 0}\int_{R \leq |x| \leq 2R} |\nabla (a\mathcal{K})|\,dx &\leq \sup_{R \geq 0}\int_{R \leq |x|\leq 2R} |(\nabla a)\mathcal{K}| \,dx + \sup_{R \geq 0 } \int_{R \leq |x| \leq 2R} |a\nabla \mathcal{K}|dx \\
&\leq \int_{1 \leq |x|\leq 2} |(\nabla a)\mathcal{K}| \,dx + \norm{a}{L^{\infty}}\sup_{R \geq 0 } \int_{R \leq |x| \leq 2R} |\nabla \mathcal{K}|dx\\
&\leq C + C\sup_{R \geq 0}\int_{R}^{2R} \frac{1}{r^d}r^{d-1}\,dr = C(1+\ln(2)),
\end{split}
\end{equation}
where in the second inequality we utilized the support of $a$ and $\nabla a$. \eqref{kernel assupmtion 2} follows from the fact that $a \in \mathcal{S}(\R^2)$ and \eqref{k decay}. 
An application of Theorem \ref{nice operator} readily gives
\begin{equation*}
\norm{\phi_zT_jf}{L^p}=\norm{\phi_{z}T_j(\phi_{z,8}f)}{L^p}\leq \frac{C(j,d,\mathcal{K})p^2}{p-1}\norm{\phi_{z,8}f}{L^p},
\end{equation*}
and the result follows from taking a supremum over $z \in \R^d$.
\end{proof}

\begin{theorem}
\label{velocity gradient estimate}
Suppose that a divergence free $u \in L^{\infty}([0,T];L^{\infty}(\R^2))$ and $\omega = \text{curl }u \in L^{\infty}([0,T];L^{p}_{ul}(\R^2))$ satisfy \eqref{FLSI}.
Let $1<p<\infty$, then
$$
\norm{\nabla u(t)}{L^p_{ul}} \leq \frac{C(d)p^2}{p-1}\norm{\omega(t)}{L^p_{ul}}+C\left(\norm{u(t)}{L^p_{ul}}+\norm{u_0}{L^p_{ul}}\right)+Ct\sup_{0 \leq s \leq t}\left\{\norm{u(s)}{L^p_{ul}} \norm{u(s)}{L^{\infty}} \right\}.
$$
\end{theorem}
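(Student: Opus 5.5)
Differentiate the Serfati-type identity \eqref{FLSI} in space and estimate each resulting term in $L^p_{ul}(\R^2)$. Taking $\nabla$ of \eqref{FLSI} gives
\begin{equation*}
\nabla u(t)=\nabla u_0+\nabla(aK)\ast(\omega(t)-\omega_0)-\int_0^t \nabla\bigl(\nabla\nabla^{\perp}((1-a)K)\bigr)\ast\cdot(u\otimes u)(s)\,ds .
\end{equation*}
The term $\nabla u_0$ is awkward, since we only control $u_0$ in $L^p_{ul}$. So I would not differentiate $u_0$ directly. Instead, I would first rewrite $u_0$ through the same identity evaluated differently, or, more simply, use \eqref{FLSI} at $t=0$ together with a Serfati-type representation of a general divergence-free field. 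Namely, for a divergence-free $v\in L^\infty$ with $\text{curl } v\in L^p_{ul}$ one has $v=(aK)\ast \text{curl } v+\nabla^{\perp}\text{div}\,((1-a)G)\ast v$-type splitting. Rather than pursue that, the cleanest route is the following.

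Apply \eqref{FLSI} and note that $u(t)-(aK)\ast\omega(t)=u_0-(aK)\ast\omega_0-\int_0^t(\cdots)ds$. Then decompose $u(t)=(aK)\ast\omega(t)+[(1-a)K]\ast\omega(t)$ formally, where the far-field part is written via integration by parts as $\nabla^{\perp}[(1-a)K]$-type kernels acting on $u$ itself:
\begin{equation*}
((1-a)K)\ast \text{curl } u = -\nabla^{\perp}((1-a)K)\ast\cdot u\text{-type convolution with an }L^1\text{ kernel}.
\end{equation*}
This is well-defined for $u\in L^\infty$ because $\nabla((1-a)K)$ decays like $|x|^{-2}$, though not integrably at infinity. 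So at the level of $\nabla u$ I would use instead
\begin{equation*}
\nabla u=\nabla(aK)\ast\omega+\nabla\nabla((1-a)K)\ast u,
\end{equation*}
whose far kernel is $O(|x|^{-3})$ and hence in $L^1$, and prove it using \eqref{FLSI}. The justification: apply $\nabla$ to \eqref{FLSI}, and apply $\nabla$ to the time integral. The kernel $\nabla^2\nabla^\perp((1-a)K)$ is in $L^1$ with decay $|x|^{-4}$, so that term is bounded by $Ct\sup_s\|u\otimes u\|_{L^p_{ul}}\le Ct\sup_s \|u\|_{L^p_{ul}}\|u\|_{L^\infty}$. This already produces the last term of the estimate.

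For the remaining pieces, $\nabla(aK)\ast\omega(t)$ is controlled by Proposition \ref{Local CZO} with $\mathcal K=K$ (homogeneous of degree $-1$), giving $\frac{Cp^2}{p-1}\|\omega(t)\|_{L^p_{ul}}$. The initial-data combination $\nabla u_0-\nabla(aK)\ast\omega_0$ must be shown to be bounded by $C\|u_0\|_{L^p_{ul}}$ (and similarly $C\|u(t)\|_{L^p_{ul}}$ appears if we handle the time-$t$ far field symmetrically). I would prove the static identity
\begin{equation*}
\nabla v-\nabla(aK)\ast\text{curl } v=\nabla\nabla^{\perp}((1-a)K)\ast\cdot\, v \quad(\text{suitably contracted}),
\end{equation*}
for divergence-free bounded $v$ with curl in $L^p_{ul}$. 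Its kernel is smooth, $O(|x|^{-3})$ at infinity, hence in $L^1$, so the uniformly local Young inequality gives $\le C\|v\|_{L^p_{ul}}$. To prove the identity, mollify $v$ and cut it off at large radius. The identity holds for compactly supported smooth fields by $\text{div}\,v=0$ and $\text{curl } v$ via $v=K\ast\text{curl } v$ plus integration by parts in the far field. Then pass to the limit using the $L^1$ kernel and local convergence.

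The main obstacle is exactly this static identity and the limiting argument that removes the cutoff. Boundary terms from truncating a nondecaying $v$ must vanish, which should follow from the $|x|^{-3}$ decay after the cutoff error is itself written in divergence form. Having it, applying it at times $0$ and $t$ and combining with the differentiated \eqref{FLSI} yields the stated bound.
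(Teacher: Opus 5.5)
Your proposal is correct, but it reaches the bound by a different route than the paper.

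\textbf{The paper's route.} The paper never writes a physical-space formula for $\nabla u_0$. It splits $\nabla u(t)$ with a Littlewood--Paley decomposition. The low block $\Delta_{-1}\nabla u(t)=\nabla\chi\ast u(t)$ is bounded by $C\norm{u(t)}{L^p_{ul}}$, which is where that term in the statement comes from. Only at high frequencies does it insert \eqref{FLSI}. There it uses the frequency-localized Biot--Savart law $\dot{\Delta}_j u_0=\dot{\Delta}_jK\ast\omega_0$, which makes sense for nondecaying data because $\dot{\Delta}_jK$ is a Schwartz kernel. This turns $\nabla u_0-\nabla(aK)\ast\omega_0$ into $\nabla\nabla^{\perp}((1-a)K)\ast\cdot u_0$, up to a $\Delta_{-1}$ block.

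\textbf{Your route.} You prove the unlocalized identity $\nabla v=\nabla(aK)\ast\text{curl }v+\nabla\nabla^{\perp}((1-a)K)\ast\cdot v$ for every bounded divergence-free $v$, and apply it at $t=0$. Combined with Proposition \ref{Local CZO} and the $L^1$ bound on $\nabla\nabla\nabla^{\perp}((1-a)K)$, this gives the estimate even without the $\norm{u(t)}{L^p_{ul}}$ term, which is harmless.

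\textbf{What each buys.} Frequency localization spares the paper any truncation limit. Your argument is elementary and entirely in physical space. The step you flag as the main obstacle is mild. At $\abs{x}\le R/2$, the cutoff errors are integrals of $\nabla\chi_R\cdot v$ or $\nabla^{\perp}\chi_R\cdot v$ against $\nabla^2G$ or $\nabla K$. The densities have size $O(R^{-1}\norm{v}{L^{\infty}})$ on an annulus of area $O(R^2)$, and the kernels are $O(R^{-2})$ there. So the errors are $O(R^{-1})$, and no divergence-form rewriting is needed. You can even avoid the limit altogether. Set $\rho\eqdef\Delta((1-a)G)\in C_c^{\infty}(\R^2)$, so that $\delta_0=\Delta(aG)+\rho$. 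Since $\Delta v=\nabla^{\perp}\text{curl }v$ for divergence-free $v$, you get $v=(aK)\ast\omega+(G\nabla^{\perp}a)\ast\omega+\rho\ast v$, with all kernels compactly supported.

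\textbf{Two points for the write-up.} First, drop the false start claiming that the time-$t$ identity follows from differentiating \eqref{FLSI}; it does not by itself, and your final argument does not use it. Second, your static identity applied at time $t$ alone already gives
\[
\norm{\nabla u(t)}{L^p_{ul}}\le\frac{Cp^2}{p-1}\norm{\omega(t)}{L^p_{ul}}+C\norm{u(t)}{L^p_{ul}},
\]
so \eqref{FLSI} is not actually needed for this estimate. The same shortcut works in the paper's argument by using $\dot{\Delta}_ju(t)=\dot{\Delta}_jK\ast\omega(t)$. Using the identity at both $0$ and $t$ together with \eqref{FLSI} is therefore redundant; one application suffices.
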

\begin{proof}
Using a Littlewood-Paley decomposition, we have that 
\begin{equation}
\label{VG 1}
\norm{\nabla u(t)}{L^p_{ul}} \leq \norm{\Delta_{-1}\nabla u(t)}{L^p_{ul}} + \norm{\sum_{j \geq 0} \dot{\Delta}_j\nabla u(t)}{L^p_{ul}}.
\end{equation}
Estimating the low frequencies, one obtains 
\begin{equation}
\label{VG 2}
\phi_z(x)\Delta_{-1}\nabla u(t) = \phi_z(x)\int_{\R^2} \chi(x-y)\nabla u(t,y)\,dy =\phi_z(x) \int_{\R^2} (\nabla \chi)(x-y) u(t,y)\,dy,
\end{equation}
so that 
\begin{equation}
    \label{VG 3}
    \norm{\Delta_{-1} \nabla u(t)}{L^p_{ul}} \leq \norm{\nabla \chi}{L^1}\norm{u(t)}{L^p_{ul}} \leq C\norm{u(t)}{L^p_{ul}}.
\end{equation}
To estimate the high frequencies we use \eqref{FLSI} and that $\dot{\Delta}_ju_0=-\dot{\Delta}_j\nabla^{\perp}(-\Delta)^{-1}\omega_0$ 
\begin{equation}
\begin{split}
\label{VG 4}
\norm{\sum_{j \geq 0} \dot{\Delta}_j\nabla u(t)}{L^p_{ul}} &\leq \norm{\sum_{j \geq 0} \dot{\Delta}_j\nabla u_0 + \sum_{j \geq 0} \dot{\Delta}_j \nabla(aK)\ast (\omega(t)-\omega_0)}{L^p_{ul}}\\
&\hspace{-2cm}+\int_0^t \norm{\sum_{j \geq 0} \dot{\Delta}_j \nabla\nabla\nabla^{\perp}((1-a)K) \ast\cdot (u\otimes u)(s)}{L^p_{ul}}\,ds\\
&\hspace{-2cm}= 
\norm{\sum_{j \geq 0} (\dot{\Delta}_j\nabla K) \ast\omega_0+ \sum_{j \geq 0} \dot{\Delta}_j \nabla(aK)\ast (\omega(t)-\omega_0)}{L^p_{ul}}\\
&\hspace{-2cm}+\int_0^t \norm{\sum_{j \geq 0} \dot{\Delta}_j \nabla\nabla\nabla^{\perp}((1-a)K) \ast\cdot (u\otimes u)(s)}{L^p_{ul}}\,ds\\
&\hspace{-2cm} \leq \norm{\sum_{j \geq 0} \dot{\Delta}_j\nabla((1-a)K)\ast\omega_0}{L^p_{ul}} + \norm{\sum_{j \geq 0} \dot{\Delta}_j \nabla(aK)\ast \omega(t)}{L^p_{ul}}\\
&\hspace{-2cm}+\int_0^t \norm{\sum_{j \geq 0} \dot{\Delta}_j \nabla\nabla\nabla^{\perp}((1-a)K) \ast\cdot (u\otimes u)(s)}{L^p_{ul}}\,ds
\eqdef I+II+III.
\end{split}
\end{equation}
\textbf{Estimate of $I$:} Using that $\omega_0=\nabla^{\perp}\cdot u_0$, associativity of convolution, and slightly abusing notation we have that 
\begin{equation}
    \label{VG 7}
    \begin{split}
I &=  \norm{\sum_{j \geq 0} \dot{\Delta}_j\nabla\nabla^{\perp}((1-a)K)\ast\cdot u_0}{L^p_{ul}}\\
&\leq \norm{\Delta_{-1}\nabla\nabla^{\perp}((1-a)K)\ast\cdot u_0}{L^p_{ul}} + \norm{\nabla\nabla^{\perp}((1-a)K)\ast\cdot u_0}{L^p_{ul}} \\
&\leq C\norm{u_0}{L^p_{ul}}.
    \end{split}
\end{equation}
\textbf{Estimate of $II$:} A direct application of Proposition \ref{Local CZO} yields
\begin{equation}
\label{VG 8}
\begin{split}
II & \leq \norm{\Delta_{-1}\nabla(aK) \ast \omega(t)}{L^p_{ul}} + \norm{\nabla (aK)\ast \omega(t)}{L^p_{ul}} \leq \frac{C(d)p^2}{p-1}\norm{\omega(t)}{L^p_{ul}}.
\end{split}
\end{equation}
\textbf{Estimate of $III$:} We have that 
\begin{equation}
\label{VG 9}
\begin{split}
III & = \int_0^t \norm{\sum_{j \geq 0} \dot{\Delta}_j \nabla\nabla\nabla^{\perp}((1-a)K) \ast\cdot (u\otimes u)(s)}{L^p_{ul}}\,ds\\
&\leq \int_0^t \left\{\norm{\nabla\nabla\nabla^{\perp}((1-a)K) \ast \cdot (u\otimes u)(s)}{L^p_{ul}}+{\norm{\Delta_{-1}\nabla\nabla\nabla^{\perp}((1-a)K) \ast \cdot (u\otimes u)(s)}{L^p_{ul}}} \right\}\,ds\\
&\leq Ct\sup_{0 \leq s \leq t} \left\{\norm{u(s)}{L^{p}_{ul}}\norm{u(s)}{L^{\infty}}\right\}
\end{split}
\end{equation}
Combining \eqref{VG 3}, \eqref{VG 4}, \eqref{VG 7}, \eqref{VG 8}, and \eqref{VG 9} yields
$$
\norm{\nabla u(t)}{L^p_{ul}} \leq \frac{C(d)p^2}{p-1}\norm{\omega(t)}{L^p_{ul}}+C\left(\norm{u(t)}{L^p_{ul}}+\norm{u_0}{L^p_{ul}}\right)+Ct\sup_{0 \leq s \leq t}\left\{\norm{u(s)}{L^p_{ul}} \norm{u(s)}{L^{\infty}} \right\}.
$$
\end{proof}
\begin{remark}
Additionally, if we assume that $\omega = \text{curl }u \in L^{\infty}([0,T];Y^{\theta}_{ul}(\R^2))$, as an immediate corollary of Theorem \ref{velocity gradient estimate}, using that $\frac{p^2}{p-1} \geq 1, \theta(p) \geq 1$, and the regularity of $u$ and $\omega$, we obtain the estimate
\begin{equation}
\label{best velocity gradient estimate}
\sup_{0 \leq t \leq T} \norm{\nabla u(t)}{L^p_{ul}} \leq C(\norm{u_0}{L^{\infty}},\norm{\omega_0}{Y^{\theta}_{ul}}, d, T) p\theta(p)  \quad \text{ for all } 2\leq p<\infty.
\end{equation}
\end{remark}

\section{Proof of Main Theorem}
In what follows, we let $u^{\nu}$ and $u$ be solutions to the Navier-Stokes and Euler equations respectively, generated from the same initial data $u^0$. We see the difference $\overline{u}=u^{\nu}-u$ must satisfy
\begin{equation}
\label{MT1}
\partial_t \overline{u} + (u^{\nu}\cdot \nabla) \overline{u}+(\overline{u}\cdot \nabla) u = \nu \Delta u^{\nu} - \nabla \overline{p},
\end{equation}
where $\overline{p}=p^{\nu}-p$.
We then multiply \eqref{MT1} by $\phi_z^2$ and take an inner product against $\overline{u}$ to obtain
\begin{equation}
    \label{MT2}
    \frac{1}{2}\frac{d}{dt}\norm{\phi_z\overline{u}(t)}{L^2}^2 = I + II + III + IV,
\end{equation}
where
\begin{equation}
\label{MT3}
\begin{split}
I&= - \int_{\R^2} \phi_z^2\overline{u}\cdot( (u^{\nu}\cdot \nabla) \overline{u}) \,dx, \\
II&= -\int_{\R^2} \phi_z^2 ((\overline{u}\cdot\nabla)u)\cdot \overline{u}\,dx,\\
III&= \nu \int_{\R^2} \phi_z^2 \ \Delta u^{\nu}\cdot \overline{u}\,dx, \\
IV&= -\int_{\R^2} \phi_z^2 \nabla \overline{p}\cdot \overline{u}\,dx.
\end{split}
\end{equation}
\\
\\
\textbf{Estimate of $I$:} We integrate by parts to see that
\begin{equation}
\label{estimate of I}
I = \frac{1} {2} \int_{\R^2} \text{div}(\phi_z^2u^{\nu}) |\overline{u}|^2 \,dx \leq \norm{\nabla \phi_z}{L^{\infty}}\int_{\R^2} |u^{\nu}|\phi_z|\overline{u}|^2\,dx.
\end{equation}
\textbf{Estimate of $II$:} We estimate $II$ as 
\begin{equation} 
\label{estimate of II}
II \leq \int |\nabla u| \phi_z^2 |\overline{u}|^2 \,dx.
\end{equation}
\textbf{Estimate of $III$:} Integration by parts yields
\begin{equation}
\label{Estimate of III 1}
\begin{split}
III&= \nu\int_{\R^2}\phi_z^2
\Delta u^{\nu}\cdot \overline{u}\,dx\\
&= \nu \int_{\R^2}\phi_z^2 \Delta \overline{u}\cdot \overline{u}\,dx + \nu \int_{\R^2} \phi_z^2 \Delta u \cdot \overline{u}\,dx\\
&= -\nu \int_{\R^2}\phi_z^2 |\nabla \overline{u}|^2\,dx - 2\nu \int_{\R^2}\phi_z \sum_{i,k=1}^2 (\partial_k\overline{u}_i)(\partial_k\phi_z)\overline{u}_i\,dx \\
&\hspace{.5cm} -\nu \int_{\R^2}\phi_z^2 \nabla u \cdot \nabla \overline{u}\,dx - 2\nu \int_{\R^2}\phi_z \sum_{i,k=1}^2 (\partial_ku_i)(\partial_k\phi_z)\overline{u}_i\,dx.
\end{split}
\end{equation}
From Young's inequality for products we can handle the last three integrals in \eqref{Estimate of III 1} as
\begin{equation}
\label{estimate of III 2}
\begin{split}
& - 2\nu \int_{\R^2}\phi_z \sum_{i,k=1}^2 (\partial_k\overline{u}_i)(\partial_k\phi_z)\overline{u}_i\,dx \leq \frac{\nu}{4}\int_{\R^2}\phi_z^2 |\nabla \overline{u}|^2\,dx + C\nu\int_{\R^2}|\nabla \phi_z|^2 |\overline{u}|^2\,dx,\\
& -\nu \int_{\R^2}\phi_z^2 \nabla u \cdot \nabla \overline{u}\,dx \leq \frac{\nu}{4}\int_{\R^2} \phi_z^2 |\nabla \overline{u}|^2\,dx + C\nu \int_{\R^2}\phi_z^2 |\nabla u|^2\,dx,\\
&- 2\nu \int_{\R^2}\phi_z \sum_{i,k=1}^2 (\partial_ku_i)(\partial_k\phi_z)\overline{u}_i\,dx \leq C\nu \int_{\R^2}\phi_z^2 |\nabla u|^2\,dx + C\nu \int_{\R^2}|\nabla \phi_z|^2|\overline{u}|^2\,dx.
\end{split}
\end{equation} 
Hence, combining \eqref{Estimate of III 1} and \eqref{estimate of III 2} we find 
\begin{equation}
\label{Estimate of III}
III \leq -\frac{\nu}{2}\norm{\phi_z\nabla \overline u(t)}{L^2}^2 + C\nu (\norm{\nabla u(t)}{L^2_{ul}}^2+ \norm{\overline{u}}{L^2_{ul}}^2) \leq C_2(\norm{u_0}{L^\infty},\norm{\omega_0}{Y^{\theta}_{ul}},T)\nu,
\end{equation}
where in the last inequality we used Theorem \ref{Taniuchi Main theorem}, Theorem \ref{velocity gradient estimate}, and the results from Appendix \ref{appendix}.
\\
\\
\textbf{Estimate of $IV$:}
 In order to estimate the pressure we begin by recalling that 
$$
p =   \sum_{i,j=1}^2 -G\ast \partial_i\partial_j (u_iu_j) \quad \text{in } BMO(\R^2),
$$
where $G$ is the fundamental solution of the Laplacian. We then substitute this expression into $IV$ and split the effects of the convolution into near and far fields to obtain 
\begin{equation}
\label{Estiamte of IV p1}
\begin{split}
    IV &= -\int_{\R^2}\phi_z^2 \nabla \left( \sum_{i,j=1}^2 -G \ast \partial_i\partial_j(u_i^{\nu}u_j^{\nu}-u_iu_j)\right)\cdot \overline{u}\,dx\\
    &= -\int_{\R^2} \phi_z^2\nabla \left(\sum_{i,j=1}^2 (-aG) \ast \partial_i\partial_j(u_i^{\nu}u_j^{\nu}-u_iu_j)\right)\cdot \overline{u}\,dx\\&\hspace{1cm}-  \int_{\R^2} \phi_z^2 \sum_{i,j=1}^2\left( \partial_i\partial_j\nabla(-(1-a)G) \ast (u_i^{\nu}u_j^{\nu}-u_iu_j)\right)\cdot \overline{u}\,dx\\ 
    &\eqdef IV_{near} + IV_{far}.
 \end{split}
\end{equation}
We estimate the near and far field terms separately. For the near field estimate, we integrate by parts, use the incompressibility condition, and utilize the support of $\phi_z$ to see  
\begin{equation}
\label{near field pressure estimate 1}
\begin{split}
IV_{near} &= \int_{\R^2} (\nabla (\phi_z^2)\cdot \overline{u}) \sum_{i,j=1}^2 \left((\partial_i\partial_j(-aG) \ast (u_i^{\nu}u_j^{\nu}-u_iu_j)\right) \,dx\\
&\leq C\sum_{i,j=1}^2 \int_{\R^2} \phi_z\phi_{z,2}|\overline{u}|\left| \left(\partial_i\partial_j (-aG) \ast (u_i^{\nu}u_j^{\nu}-u_iu_j)\right)\right|\,dx\\
&\leq C \norm{\overline{u}}{L^2_{ul}}\sum_{i,j=1}^2\norm{\partial_i\partial_j(-aG) \ast (u_i^{\nu}u_j^{\nu}-u_iu_j)}{L^2_{ul}}.
\end{split} 
\end{equation}
In order to estimate the last term in \eqref{near field pressure estimate 1} we first note that for any $z \in \R^2$ one has
$$
\phi_z\left(\partial_i\partial_j(-aG) \ast (u_i^{\nu}u_j^{\nu}-u_iu_j)\right)=\phi_z\left(\partial_i\partial_j(-aG) \ast \phi_{z,8}(u_i^{\nu}u_j^{\nu}-u_iu_j)\right),
$$
which implies
\begin{equation}
\label{near field pressure estimate 2}
\norm{\partial_i\partial_j(-aG) \ast (u_i^{\nu}u_j^{\nu}-u_iu_j)}{L^2_{ul}} \leq \sup_{z \in \R^2}\norm{\left(\partial_i\partial_j(-aG) \ast \phi_{z,8}(u_i^{\nu}u_j^{\nu}-u_iu_j)\right)}{L^2},
\end{equation}
where we used $|\phi_z(x)| \leq 1$. Using a Littlewood-Paley decomposition, Lemma \ref{near field pressure kernel}, and Plancherel's Theorem, one obtains
\begin{equation}
\label{near field pressure estimate 3}
\begin{split}
\norm{\left(\partial_i\partial_j(-aG) \ast \phi_{z,8}(u_i^{\nu}u_j^{\nu}-u_iu_j)\right)}{L^2}^2 &= C\sum_{k \geq -1} \norm{\Delta_k\left(\partial_i\partial_j(-aG) \ast \phi_{z,8}(u_i^{\nu}u_j^{\nu}-u_iu_j)\right)}{L^2}^2\\
&= 
C\sum_{k \geq -1} \norm{\mathcal{F}\left(\partial_i\partial_j(-aG)\right)\mathcal{F}\left(\Delta_k \phi_{z,8}(u_i^{\nu}u_j^{\nu}-u_iu_j)\right)}{L_{\xi}^2}^2\\
&\leq C\norm{\mathcal{F}\left(\partial_i\partial_j(-aG)\right)}{L^{\infty}_{\xi}}^2\sum_{k \geq -1} \norm{\Delta_k \left(\phi_{z,8}(u_i^{\nu}u_j^{\nu}-u_iu_j)\right)}{L^2}^2\\
&\leq
C\norm{u_i^{\nu}u_j^{\nu}-u_iu_j}{L^2_{ul}}^2.
\end{split}
\end{equation}
It follows then that 
\begin{equation}
\label{near field pressure estimate 4}
IV_{near} \leq C \norm{\overline{u}}{L^2_{ul}}^2 \left(\norm{u^{\nu}}{L^{\infty}}+\norm{u}{L^{\infty}}\right).
\end{equation}
We next estimate the far-field term of the pressure. One has that 
\begin{equation}
\label{far field pressure estimate}
\begin{split}
IV_{far} &= -\int_{\R^2}\phi_z^2  \left(\sum_{i,j=1}^2 \partial_i\partial_j\nabla\left((-(1-a)G)\ast(u_i^{\nu}u_j^{\nu}-u_iu_j)\right)\right)\cdot \overline{u}\,dx\\
&\leq C\norm{\overline{u}}{L^2_{ul}} \sum_{i,j=1}^2 \norm{\partial_i\partial_j\nabla ((-(1-a)G))\ast\cdot (u_i^{\nu}u_j^{\nu}-u_iu_j)}{L^2_{ul}}\\
&\leq 
C\norm{\overline{u}}{L^2_{ul}}\sum_{i,j=1}^2 \norm{\partial_i\partial_j\nabla ((-(1-a)G))}{L^1}\norm{u_i^{\nu}u_j^{\nu}-u_iu_j}{L^2_{ul}}\\
&\leq C\norm{\overline{u}}{L^2_{ul}}^2 \left(\norm{u^{\nu}}{L^{\infty}}+\norm{u}{L^{\infty}}\right).
\end{split}
\end{equation}
Combining \eqref{MT2}, \eqref{estimate of I},\eqref{estimate of II},\eqref{Estimate of III},\eqref{near field pressure estimate 4}, \eqref{far field pressure estimate}, we find that 
\begin{equation}
\label{MT4}
\frac{d}{dt}\norm{\overline{u}(t)}{L^2_{ul}}^2 \leq C\sup_{z \in \R^2}\int_{\R^2} |\nabla u(t)|\phi_z^2|\overline{u}(t)|^2\,dx + C_1(t)\norm{\overline{u}(t)}{L^2_{ul}}^2 + C_2(\norm{u_0}{L^\infty},\norm{\omega_0}{Y^{\theta}_{ul}},T)\nu.
\end{equation}
where $C_2$ is defined in \eqref{Estimate of III} and $C_1(t)=\norm{u^{\nu}(t)}{L^{\infty}}+\norm{u(t)}{L^{\infty}}$. An application of Lemma \ref{beta lemma} and Theorem \ref{velocity gradient estimate} to \eqref{MT4} yields
\begin{equation}
\label{MT5}
\frac{d}{dt} \norm{\overline{u}(t)}{L^2_{ul}}^2 \leq C\beta_{M,\psi}(\norm{\overline{u}(t)}{L^2_{ul}}^2) + C_1(t)\norm{\overline{u}(t)}{L^2_{ul}}^2 + C_2(\norm{u_0}{L^\infty},\norm{\omega_0}{Y^{\theta}_{ul}},T)\nu,
\end{equation}
where 
$$
\psi(p)=C(d,T,\norm{u_0}{L^{\infty}},\norm{\omega_0}{Y^{\theta}_{ul}},T)p\theta(p)
,$$
and 
$$
M=\sup_{0 < \nu \leq 1}\sup_{0 \leq s \leq T}\left(\norm{u(s)}{L^{\infty}}+\norm{u^{\nu}(s)}{L^{\infty}}\right)^2,
$$
which is admissible in the sense of Definition \ref{admissible theta}. For ease of notation, we set $L(t)=\norm{\overline{u}(t)}{L^2_{ul}}^2$ and take the substitution $Y(t)=e^{-\int_0^tC_1(s)\,ds}L(t)$. It then follows that 
\begin{equation}
\label{ODE for Y}
Y'(t) \leq e^{-\int_0^t C_1(s)\,ds}\left(C\beta_{M,\psi}(e^{\int_0^tC_1(s)\,ds}Y(t))+C_2(\norm{u_0}{L^\infty},\norm{\omega_0}{Y^{\theta}_{ul}},T)\nu\right).
\end{equation}
From the definition of $\beta_{M,\psi}$ one obtains
\begin{equation}
\label{beta comparison}
\begin{split}
Ce^{-\int_0^t C_1(s)\,ds}\beta_{M,\psi}(e^{\int_0^t C_1(s)\,ds}Y(t)) &= Ce^{-\int_0^t C_1(s)\,ds} \inf_{\eps \in (0,1/2]} \left\{ M^{\eps} (e^{\int_0^t C_1(s)\,ds}Y(t))^{1-\eps} \psi(1/\eps)\right\}\\
&\leq C_3 \inf_{\eps \in (0,1/2]} \left\{ M^{\eps} (Y(t))^{1-\eps} \psi(1/\eps)\right\}\\
&= C_3\beta_{M,\psi} (Y(t)).
\end{split}
\end{equation}
 Substituting \eqref{beta comparison} into \eqref{ODE for Y} and integrating in time yields
\begin{equation}
\label{C eqn}
Y(t) \leq C_2(\norm{u_0}{L^\infty},\norm{\omega_0}{Y^{\theta}_{ul}},T)T\nu  + C_3\int_0^t \beta_{M,\psi}(Y(s))\,ds.
\end{equation}
An application of Osgood's Lemma gives
\begin{equation}
\label{Final Osgood Estimate}
\int_{C_2(\norm{u_0}{L^\infty},\norm{\omega_0}{Y^{\theta}_{ul}},T)T\nu}^1 \frac{ds}{C_3\beta_{M,\psi}(s)} \leq t\ + \int_{Y(t)}^1 \frac{ds}{C_3\beta_{M,\psi}(s)}.
\end{equation}
Taking the limit as $\nu \to 0^+$, using the admissibility condition, we see that the left-hand side of \eqref{Final Osgood Estimate} must go to positive infinity; hence, so must the right. Since $\beta_{M,\psi}(s)$ is strictly positive when $s$ is bounded away from zero, we see that 
$$
\lim_{\nu \to 0^+}\sup_{t \in [0,T]} Y(t)=0,
$$
which in turn yields that 
\begin{equation}
\label{convergence}
\lim_{\nu \to 0^+}\sup_{t \in [0,T]} \norm{\overline{u}(t)}{L^2_{ul}}=0.
\end{equation}
\begin{remark}
Regarding the rate of convergence in \eqref{convergence}, we obtain an analogous result to \cite{kelliher2004} and provide an overview of their result and how it relates to our result. We implicitly define the function $f:\R^+ \to \R^+$ by 
$$
\int_x^{f(x)} \frac{ds}{C_3\beta_{M,\psi}(s)}=T.
$$
From the admissibility condition, we claim that $\lim_{x \to 0^+}f(x)=0$. To see this, assume for contradiction that there exists a sequence $\{x_n\}_{n=1}^{\infty}$ and a $\delta>0$ such that $f(x_n) \geq \delta$ for all $n \in \mathbb{N}$. Then,
$$
T=\int_{x_n}^{f(x_n)} \frac{ds}{C_3\beta_{M,\psi}(s)} \geq \int_{x_n}^{\delta}\frac{ds}{C_3\beta_{M,\psi}(s)} \to \infty  \quad \text{as } n\to \infty.
$$
Hence it must be the case that $f(x) \to 0$ as $x \to 0^+$.  Now, from the Osgood argument \eqref{Final Osgood Estimate} we have that 
$$
\int_{C_2(\norm{u_0}{L^\infty},\norm{\omega_0}{Y^{\theta}_{ul}},T)T\nu}^{Y(t)} \frac{ds}{C_3\beta_{M,\psi}(s)} \leq T,
$$
which in particular implies that $Y(t) \leq f(C_2(\norm{u_0}{L^\infty},\norm{\omega_0}{Y^{\theta}_{ul}},T)T\nu)$ so that 
\begin{equation}
\label{rate of convergence}
\norm{\overline{u}(t)}{L^2_{ul}} \leq \left(e^{\int_0^t C_1(s)ds}f\left(C_2(\norm{u_0}{L^\infty},\norm{\omega_0}{Y^{\theta}_{ul}},T)T\nu\right)\right)^\frac{1}{2}.
\end{equation}
\end{remark}
\begin{appendix}
\section{Uniform Bounds for Navier-Stokes}
\label{appendix}
\noindent
Below, we consider non-decreasing $\theta:[1,\infty) \to [1,\infty)$ satisfying
$$
\int_1^{\infty}\frac{dp}{\theta(p)}=\infty.
$$
We consider a solution pair $(u^{\nu},\omega^\nu)$ to the Navier-Stokes equation with initial data $u_0 \in L^{\infty}(\R^2), \text{div }u_0=0$, and $\omega_0 = \text{curl }u_0 \in Y^{\theta}_{ul}(\R^2)$. We aim to show that there exists constants $C_T$ and $M_T$ (which do not depend on the viscosity) such that 
\begin{equation}
\label{NS uniform bounds}
\sup_{0 < \nu \leq 1}\sup_{0 \leq t \leq T}\norm{u^{\nu}(t)}{L^{\infty}} \leq M_T \text{ and } \sup_{0 < \nu \leq 1}\sup_{0 \leq t \leq T}\norm{\omega^{\nu}(t)}{Y^{\theta}_{ul}} \leq C_T.
\end{equation}
We closely follow the results of \cite{taniuchi2004}. Assume that $u^{\nu}$ is a solution of the Navier-Stokes equations in mild-formulation in the sense of Definition \ref{mild solution}, i.e. assume that
\begin{equation}
\label{ns mild formualtion}
u^{\nu}(t)=e^{\nu\Delta(t-s)}u^{\nu}(s)-\int_s^t e^{\nu\Delta(t-\tau)}\mathbb{P}\nabla \cdot (u^{\nu} \otimes u^{\nu})(\tau)\,d\tau, \quad \text{for all } 0 <s \leq t.
\end{equation}
 We then split $u^{\nu}$ at low and high frequencies. To estimate the low-frequencies, we note that  $$
\norm{S_Ne^{\nu\Delta(t-\tau)}\mathbb{P}\nabla \cdot (f\otimes f)}{L^{\infty}} \leq C2^N\norm{f}{L^{\infty}}^2 \quad \text{for all } f \in L^{\infty}(\R^2),
$$
which yields the estimate 
\begin{equation}
\label{ns low frequency velocity bound}
\norm{S_Nu^{\nu}(t)}{L^{\infty}} \leq C_0\norm{u^{\nu}(s)}{L^{\infty}}+C2^N\int_s^t \norm{u^{\nu}(\tau)}{L^{\infty}}^2\,d\tau.
\end{equation}
To estimate the high frequencies, we utilize Proposition 1.6 from \cite{taniuchi2004} to see that 
\begin{equation}
\label{ns high frequency velocity bound}
\sum_{j \geq N} \norm{\dot{\Delta}_ju^{\nu}(t)}{L^{\infty}} \leq \sum_{j \geq N} 2^{-j}\norm{\dot{\Delta}_j\omega^{\nu}(t)}{L^{\infty}} \leq C2^{-N}\max\left\{2^{2N/p},1\right\}\norm{\omega^{\nu}(t)}{L^p_{ul}}.
\end{equation}
From Lemma 1.4 in \cite{taniuchi2004} one has the uniform viscosity bound
\begin{equation}
\label{uniform vorticity viscosity}
\norm{\omega^{\nu}(t)}{L^p_{ul}} \leq C(1+(t-s)(1+\sup_{s\leq \tau \leq t}\norm{u^{\nu}(\tau)}{L^{\infty}}))^{2/p}\norm{\omega^{\nu}(s)}{L^p_{ul}}.
\end{equation}
Then, combining \eqref{ns low frequency velocity bound},\eqref{ns high frequency velocity bound}, and \eqref{uniform vorticity viscosity} we see that 
\begin{equation}
\begin{split}
\label{ns velocity bound 1}
\norm{u^{\nu}(t)}{L^{\infty}}\leq&   C_0\norm{u^{\nu}(s)}{L^{\infty}}+C_12^N\int_s^t\norm{u^{\nu}(\tau)}{L^{\infty}}^2\,d\tau\\ &+ C2^{-N}\max\left\{2^{2N/p},1\right\}(1+(t-s)(1+\sup_{s\leq \tau \leq t}\norm{u^{\nu}(\tau)}{L^{\infty}}))^{2/p}\norm{\omega^{\nu}(s)}{L^p_{ul}}.
\end{split}
\end{equation}
We set $g(t)=1+\sup_{0 \leq s \leq t}\norm{u^{\nu}(s)}{L^{\infty}}$, which we note is continuous and non-decreasing. Since $2C_0>1$, one has that $g(t)<2C_0g(s)$ for $t$ sufficiently close to $s$. We define
$$
T_1(s)=\sup\left\{ \tau \geq 0: g(s+\tau) \leq 2C_0g(s)\right\},
$$
so that 
\begin{equation}
\label{g pre iterate}
g(t) \leq 2C_0g(s) \quad \text{for all } s \leq t \leq s+T_1(s).
\end{equation}
Let $\eps>0$. Then, for $s \leq t\leq s+\min\left\{1,T_1(s)\right\}$ and $p \geq 2+\eps$ one has 
\begin{equation}
\label{ns velocity bound 2}
g(t) \leq C_0g(s) + 4C_0^2C_12^N(g(s))^2(t-s)+C(\eps)2^{-N}\max\left\{2^{2N/p},1\right\}(g(s))^{2/p}\norm{\omega^{\nu}(s)}{L^p_{ul}}.
\end{equation}
Following an identical argument to \cite{taniuchi2004}, we then find that 
\begin{equation}
\label{ns velocity bound 3}
g(t) \leq 2C_0g(s) \quad \text{for all } s\leq t \leq s+\frac{C_0}{2C(\eps)}\min\left\{\frac{1}{\norm{\omega^{\nu}(s)}{L^p_{ul}}^{p/(p-2)}}, \frac{1}{g(s)^{2/p}\norm{\omega^{\nu}(s)}{L^p_{ul}}},1\right\}.
\end{equation}
We set $\eps=2, p_k=k+4,$ and define the sequence $\{t_k\}_{k=0}^\infty$ as $t_0=0$ and 
\begin{equation}
\label{tk definition}
t_{k+1}=t_k + \frac{1}{2C}\min\left\{\frac{1}{\norm{\omega^{\nu}(t_k)}{L^{p_k}_{ul}}^{p_k/(p_k-2)}}, \frac{1}{(g(t_k))^{2/p_k}\norm{\omega^{\nu}(t_k)}{L^{p_k}_{ul}}},1\right\} \quad \text{ for all } k \geq 0.
\end{equation}
Hence, from \eqref{ns velocity bound 3} we see that 
\begin{equation}
\label{g iterate}
g(t_k) \leq 2C_0g(t_{k-1}) \leq \cdots \leq (2C_0)^kg(0).
\end{equation}
Substituting \eqref{g iterate} into \eqref{uniform vorticity viscosity} and using that $t_k \leq k$ yields
\begin{equation}
\label{ns vorticity uniform 2}
\begin{split}
\norm{\omega^{\nu}(t_k)}{L^{p_k}_{ul}} 
&\leq C(1+t_kg(t_k))^{2/p_k}\norm{\omega^{\nu}_0}{L^{p_k}_{ul}}\\
&\leq C((k+1)g(t_k))^{2/(k+4)}\norm{\omega^{\nu}_0}{L^{p_k}_{ul}}\\
&\leq C(k+1)^{2/(k+4)} (g(0))^{1/2}\theta(p_k)\norm{\omega^{\nu}_0}{Y^{\theta}_{ul}}\\
&\leq C(g(0))^{1/2}\theta(p_k)\norm{\omega^{\nu}_0}{Y^{\theta}_{ul}}.
\end{split}
\end{equation}
We now note that $1<\frac{p_k}{p_k-2}\leq 2$ which gives the estimate
\begin{equation}
\label{ns vorticity uniform 3}
\norm{\omega^{\nu}(t_k)}{L^{p_k}}^{p_k/(p_k-2)}\leq Cg(0)(\norm{\omega_0}{Y^{\theta}_{ul}}+1)^2\theta(p_k)^{p_k/(p_k-2)}.
\end{equation} Since $\theta \geq 1$ we also obtain
\begin{equation}
\label{ns vorticity uniform 4}
\begin{split}
g(t_k)^{2/p_k}\norm{\omega^{\nu}(t_k)}{L^{p_k}_{ul}} &\leq Cg(t_k)^{2/p_k} g(0)^{1/2}\theta(p_k)\norm{\omega_0}{Y^{\theta}_{ul}}\\
&\leq Cg(0)\theta(p_k)\norm{\omega_0}{Y^{\theta}_{ul}}\\
&\leq Cg(0)(1+\norm{\omega_0}{Y^{\theta}_{ul}})^2\theta(p_k)^{p_k/(p_k-2)},
\end{split}
\end{equation} where in the second inequality we used \eqref{g iterate}. From \eqref{tk definition}, \eqref{ns vorticity uniform 3}, and \eqref{ns vorticity uniform 4} we see that 
\begin{equation}
\label{t_k difference bound}
\begin{split}
t_{k+1}-t_k&=\frac{1}{2C}\min\left\{\frac{1}{\norm{\omega^{\nu}(t_k)}{L^{p_k}_{ul}}^{p_k/(p_k-2)}}, \frac{1}{(g(t_k))^{2/p_k}\norm{\omega^{\nu}(t_k)}{L^{p_k}_{ul}}},1\right\}\\
&\geq \frac{C}{g(0)(1+\norm{\omega_0}{Y^{\theta}_{ul}})^2\theta(p_k)^{p_k/(p_k-2)}}.
\end{split}
\end{equation}
We now show that $t_k \to \infty$ from the condition $\int_1^{\infty}\frac{dp}{\theta(p)}=\infty$. We see that 
$$
t_k=\sum_{j=1}^k(t_j-t_{j-1}) \geq \frac{C}{g(0)(1+\norm{\omega_0}{Y^{\theta}_{ul}})^2}\sum_{j=0}^{k-1} \frac{1}{\theta(j+4)^{(j+4)/(j+2)}}.
$$
We will show that $\sum_{n=4}^{\infty}\frac{1}{\theta(n)^{n/(n-2)}}=\infty$. We split $\mathbb{Z}_{\geq 4}$ into the disjoint sets $\left\{ n \geq 4:\theta(n) \leq e^n\right\}$ and $ \left\{ n \geq 4: \theta(n)>e^n\right\}$. Clearly
$$
\sum_{\substack{n \geq 4\\ \theta(n) > e^n}} \frac{1}{\theta(n)} \leq \sum_{n=1}^{\infty}e^{-n}<\infty, 
$$
and hence 
$$
\sum_{\substack{n \geq 4\\ \theta(n)\leq e^n}} \frac{1}{\theta(n)} =\infty.
$$
For $n \geq 4$ satisfying $\theta(n) \leq e^n$ we observe that 
\begin{equation*}
\begin{split}
\frac{1}{\theta(n)^{n/(n-2)}} &= \frac{1}{\theta(n)} \exp\left(-\frac{2\log(\theta(n))}{n-2}\right)\\
&\geq \frac{1}{\theta(n)}\exp\left(-\frac{2n}{n-2}\right) \geq \frac{e^{-4}}{\theta(n)}.
\end{split}
\end{equation*}
It follows that 
$$
\sum_{n =4}^{\infty} \frac{1}{(\theta(n))^{n/(n-2)}}=\infty.
$$
We define the sequence of partial sums 
$$
S_N=\sum_{k=0}^{N-1} \frac{1}{\theta(k+4)^{(k+4)/(k+2)}}.
$$
so that $t_N \geq \frac{CS_N}{g(0)(1+\norm{\omega_0}{Y^{\theta}_{ul}})^2}.$ Then, given $T<\infty$, we choose $N \gg 1$ such that $\frac{CS_N}{g(0)(1+\norm{\omega_0}{Y^{\theta}_{ul}})^2} \geq T$. Since $t_N \geq T$ it follows that
\begin{equation}
\label{ns final uniform velocity}
\sup_{0 \leq t \leq T}\norm{u^\nu(t)}{L^{\infty}} \leq (2C_0)^N (1+\norm{u_0}{L^{\infty}}) \eqdef M_T.
\end{equation}
Then for $p \geq 2$ we similarly obtain
\begin{equation}
\label{ns final uniform vorticity}
\frac{\norm{\omega^{\nu}(t)}{L^p_{ul}}}{\theta(p)} \leq C(1+TM_T)^{2/p}\norm{\omega_0}{Y^{\theta}_{ul}} \leq  C(1+TM_T)\norm{\omega_0}{Y^{\theta}_{ul}} \eqdef C_T,
\end{equation}
and the result follows from taking a supremum over $p \geq 2$.

\end{appendix}
\section*{Acknowledgements}
\noindent The author would like to thank Elaine Cozzi for suggesting this problem to work on and for continued mentoring. The author also thanks Yasushi Taniuchi for fruitful discussions regarding their work.
\bibliographystyle{plain}
\bibliography{YulVVbib}

\end{document}